\pgfplotsset{compat=1.18}
\def\exampletext{Example} % If English
\NewDocumentEnvironment{example}{ O{} }
{
\colorlet{colexam}{red!55!black} % Global example color
\newtcolorbox[use counter=example]{testexamplebox}{%
    % Example Frame Start
    empty,% Empty previously set parameters
    title={\exampletext: #1},% use \thetcbcounter to access the example counter text
    % Attaching a box requires an overlay
    attach boxed title to top left,
       % Ensures proper line breaking in longer titles
    % (boxed title style requires an overlay)
    boxed title style={empty,size=minimal,toprule=0pt,top=4pt,left=3mm,overlay={}},
    coltitle=colexam,fonttitle=\bfseries,
    before=\par\medskip\noindent,parbox=true,boxsep=0pt,left=3mm,right=0mm,top=2pt,breakable,pad at break=0mm,
       before upper=\csname @totalleftmargin\endcsname0pt, % Use instead of parbox=true. This ensures parskip is inherited by box.
    % Handles box when it exists on one page only
    overlay unbroken={\draw[colexam,line width=.5pt] ([xshift=-0pt]title.north west) -- ([xshift=-0pt]frame.south west); },
    % Handles multipage box: first page
    overlay first={\draw[colexam,line width=.5pt] ([xshift=-0pt]title.north west) -- ([xshift=-0pt]frame.south west); },
    % Handles multipage box: middle page
    overlay middle={\draw[colexam,line width=.5pt] ([xshift=-0pt]frame.north west) -- ([xshift=-0pt]frame.south west); },
    % Handles multipage box: last page
    overlay last={\draw[colexam,line width=.5pt] ([xshift=-0pt]frame.north west) -- ([xshift=-0pt]frame.south west); },%
    }
\begin{testexamplebox}}
{\end{testexamplebox}\endlist}
\newcommand{\V}[1]{\boldsymbol{#1}} %# vector
\newcommand{\abs}[1]{\left|#1\right|} %abs
\newcommand{\bs}{{\boldsymbol{s}}}
\newcommand{\bx}{{\boldsymbol{x}}}
\newcommand{\bt}{{\boldsymbol{t}}}
\newcommand{\valpha}{{\boldsymbol{\alpha}}}
\newcommand{\psigma}{{\bs_{\partial R}}}
\newcommand{\rvalpha}{{\tilde{\boldsymbol{\alpha}}}}
\newcommand{\ralpha}{{\tilde{\alpha}}}
\tikzset{every picture/.style={line width=1pt}} %set default line width to 0.75pt        
\crefname{hypothesis}{Hypothesis}{Hypotheses}
\definecolor{c01}{HTML}{DC050C} % red
\definecolor{c02}{HTML}{5289C7} % blue
\definecolor{c03}{HTML}{4EB256} % green
\definecolor{c04}{HTML}{882E72} % purple
\definecolor{c05}{HTML}{CC4C02}
\definecolor{c06}{HTML}{AA3377}
\definecolor{c07}{HTML}{BBBBBB}
\definecolor{c08}{HTML}{BBBBBB}
\title{Automated Discovery of Branching Rules with Optimal Complexity for the Maximum Independent Set Problem
  \thanks{Submitted to the editors DATE.
  \funding{
    This work is partially funded by the National Key R\&D Program of China (Grant No. 2024YFE0102500), National Natural Science Foundation of China (No. 12404568, 12047503, 12325501 and 12247104), the Guangzhou Municipal Science and Technology Project (No. 2023A03J00904), and project ZDRW-XX-2022-3-02 of the Chinese Academy of Sciences. P.Z. is partially supported by the Innovation Program for Quantum Science and Technology project 2021ZD0301900.
  }
  }
}
\author{
    Xuan-Zhao Gao \thanks{ Thrust of Advanced Materials, The Hong Kong University of Science and Technology (Guangzhou), Guangdong, China; and Department of Physics, The Hong Kong University of Science and Technology, Hong Kong SAR, China (\email{xz.gao@connect.ust.hk}).}
    \and Yi-Jia Wang \thanks{CAS Key Laboratory for Theoretical Physics, Institute of Theoretical Physics, Chinese Academy of Sciences, Beijing 100190, China; and School of Physical Sciences, University of Chinese Academy of Sciences, Beijing 100049, China (\email{wangyijia@itp.ac.cn}), contributed equally with Xuan-Zhao Gao to this work.}
    \and Pan Zhang \thanks{CAS Key Laboratory for Theoretical Physics, Institute of Theoretical Physics, Chinese Academy of Sciences, Beijing 100190, China; and School of Fundamental Physics and Mathematical Sciences, Hangzhou Institute for Advanced Study, UCAS, Hangzhou 310024, China (\email{panzhang@itp.ac.cn}).}
    \and Jin-Guo Liu \thanks{ Thrust of Advanced Materials, The Hong Kong University of Science and Technology (Guangzhou), Guangdong, China (\email{jinguoliu@hkust-gz.edu.cn}).}
}
\begin{document}

% \title{Automated Discovery of Optimal Branching Rules for the Maximum Independent Set Problem}

\maketitle

% REQUIRED
\begin{abstract}
The branching algorithm is a fundamental technique for designing fast exponential-time algorithms to solve combinatorial optimization problems exactly.  
It divides the entire solution space into independent search branches using predetermined branching rules, and ignores the search on suboptimal branches to reduce the time complexity. 
The complexity of a branching algorithm is primarily determined by the branching rules it employs, which are often designed by human experts.
% However, existing branching algorithms are often limited by the computational intractability of identifying the rule that minimizes future complexity, which prevents them from achieving optimality in most cases.
% However, existing branching algorithms are often limited by their predetermined finite rule set, which is not optimal in most cases.
In this paper, we show how to automate this process with a focus on the maximum independent set problem. The main contribution is an algorithm that efficiently generate optimal branching rules for a given sub-graph with tens of vertices.
% with existing numerical tools including the generic tensor network and the integer/linear programming solvers.
Its efficiency enables us to generate the branching rules on-the-fly, which is provably optimal and significantly reduces the number of branches compared to existing methods that rely on expert-designed branching rules.
%Through experimental evaluation, we demonstrate that our algorithm's ability to discover branching rules with the lowest future complexity. 
Numerical experiment on 3-regular graphs shows an average complexity of $O(1.0441^n)$ can be achieved, better than any previous methods.

\end{abstract}

% REQUIRED
\begin{keywords}
    Branching algorithm, Maximum independent set problem, Theorem proving
\end{keywords}

% REQUIRED
\begin{AMS}
    90C27, 05C69, 68V15
\end{AMS}

% \xuanzhao{choose three or four?
%     \begin{itemize}
%     \item 68R07     Computational aspects of satisfiability;
%     \item 05C69  	Vertex subsets with special properties (dominating sets, independent sets, cliques, etc.);
%     \item 90C27  	Combinatorial optimization;
%     \item 90C11  	Mixed integer programming;
%     \item 15A69  	Multilinear algebra, tensor calculus;
%     \item 14N10  	Enumerative problems (combinatorial problems) in algebraic geometry;
%     \item 68V15  	Theorem proving (automated and interactive theorem provers, deduction, resolution, etc.).
%     \end{itemize}
% }

\section{Introduction}\label{sec:intro}

% Introduction to the branching algorithm.
The branching algorithm~\cite{morrison2016branch}, also called the branch-and-bound algorithm, is a standard scheme to produce exact solutions to combinatorial optimization problems~\cite{peres2021combinatorial}.
Its main idea is to divide the solution space into smaller regions, and conquer each subproblem independently and recursively. %, then combine the results to obtain the solution to the original problem.
Since its birth in 1960s~\cite{land1960branch}, the branching algorithm has been applied to solving many NP-hard problems~\cite{Moore2011}, such as the integer programming~\cite{le2017abstract, khalil2016learning, gamrath2018measuring}, the maximum satisfiability (MAX-SAT) problem~\cite{li2005exploiting, abrame2014ahmaxsat, argelich2018clause}, and the traveling salesman problem (TSP)~\cite{eppstein2007traveling, carpaneto1980some}.
It is safe to say that at least half of the published fast exponential time algorithms are based on this scheme~\cite{Fomin2013}.

One of the most successful applications of the branching algorithm is exactly solving the maximum independent set (MIS) problem~\cite{tarjan1977finding}.
With a given graph, the MIS problem asks for the largest set of vertices such that no two vertices are adjacent.
The MIS problem is classified as NP-complete~\cite{Moore2011}, indicating that no algorithm can solve it in polynomial time unless $\text{P} = \text{NP}$. Furthermore, if the exponential time hypothesis holds true, it is widely believed that solving it in sub-exponential time is impossible~\cite{Lokshtanov2013,Impagliazzo2001}.
The branching algorithm gives the state-of-the-art complexity of $O(1.1996^n)$~\cite{Xiao2017}, where $n$ is the number of vertices in the graph.
By upper bounding the vertex degrees, an even lower complexity can be achieved. For example, the MIS problem on the 3-regular graphs can be solved in time $O(1.0836^n)$~\cite{Xiao2009,xiao2013}.
The branching algorithm studies a given graph by assuming some vertices to be in or out of the independent set, and then recursively solves each smaller subproblem under that assumption.
%Similarly, these algorithms are often based on a fixed set of theorems and lemmas, called the branching rules, which are applied when detected special local structures.
%These rules are general-purpose and can be used for different sub-graphs.
Branching rules play a central role in making assumptions. Usually, the algorithm prepares a fixed table of rules such as the mirror rule~\cite{Fomin2006}, the satellite rule~\cite{kneis2009fine}, and more specialized rules~\cite{jian1986algorithm, Fomin2006, bourgeois2012fast, robson1986algorithms}.
The algorithm inspects a sub-graph and implements the best-fit rule to divide the solution space into smaller ones.
The complexity of a branching algorithm is typically expressed as $\mathcal{O}(\gamma^n)$, where $\gamma > 1$ represents the \textit{branching factor}.
A good set of \textit{branching rules} gives a smaller $\gamma$, which leads to a faster algorithm.
%Each application of a branching rule is associated with a branching vector $(b_1, b_2, \ldots)$, which measures the amount of problem size reduction in each subproblem, or branch.
%The branching vector gives a local $\gamma$, and the largest one in the branching process upper bounds the overall complexity.\jinguo{TODO: fix}

However, these methods exhibit two disadvantages. 
First, they rely on a fixed set of rules, which requires human experts to design~\cite{xiao2013, Xiao2017}. 
Second, these rules are generic and not optimal for every specific sub-graph under analysis. 
To address these issues, we propose to branch in an on-the-fly manner, i.e. automatically generate branching rules for each sub-graph under consideration, rather than relying on a finite set of predefined rules. 
With a branching rule tailored for the given sub-graph, a provably optimal $\gamma$ and a faster algorithm can be achieved.

% No matter how clever this finite set of generic rules are designed, they can not provide the optimal branching for every specific sub-graph being analyzed.
% Hence, we propose to perform on-the-fly branching for each sub-graph under consideration instead of using a finite set of rules. Then the information in the sub-graph can be fully utilized, and the complexity of the algorithm can be maximally reduced.
% Despite the existence of numerous algorithms that can determine the branching variables~\cite{achterberg2005branching,fischetti2011backdoor} dynamically, no algorithm can dynamically generate new branching rules on the the variables under consideration yet to our best knowledge.

In this paper, an algorithm for automatically generating optimal branching rules for a given sub-graph is developed, which exhibits provably optimal complexity, i.e. generating the smallest $\gamma$ among all possible branching rules.
The algorithm first resolves the local constraints of a sub-graph with existing numerical tools. By establishing a connection between the problem of finding the optimal branching rules and a weighted minimum set covering problem (WMSC), the optimal branching rule can be efficiently resolved through integer programming or its linear programming relaxation.
With the MIS problem, we demonstrate that the larger the sub-graph the algorithm analyzes, the more effective the generated branching rule becomes.
By limiting the sub-graph to the 2-distance neighborhood of a vertex, the numerical result shows a clear advantage compared to existing methods in terms of reducing the number of branches. 
%It is shown that by generating optimal branching rules for various specific sub-graphs on the fly, the complexity of each step is reduced, resulting in an overall faster branching algorithm that achieves a lower total number of branches compared to employing a static rule set. 
%Additionally, we illustrate that performing branching on larger subgraphs to incorporate more connectivity information can aid in discovering superior branching rules. Furthermore, our approach can be applied across a wide range of domains. 
We also show how the developed tool can be used in theorem proving to reduce human effort.
%By leveraging the connectivity information among atomic propositions induced by constraints, our approach can generate additional rules, thereby effectively shortening the length of the proof system.
In the past, the efforts to automate the branching algorithm are mainly focused on dynamically determining branching variables~\cite{achterberg2005branching,fischetti2011backdoor}.
To the best of our knowledge, our method is the first one that can dynamically generate new branching rules based on the variables under consideration.

The rest of the paper is structured as follows.
In \Cref{sec:preliminaries}, we introduce the preliminaries of the branching algorithm and the MIS problem.
In \Cref{sec:optimal-branching}, we show the details of the proposed method and its application to the MIS problem.
In \Cref{sec:rule-discovery}, we show several examples to demonstrate the effectiveness of the proposed method in discovering the branching rule.
In \Cref{sec:numerical-results}, numerical results are provided to show the efficiency of the proposed method in practice.
Finally, we conclude the paper in \Cref{sec:conclusion}.

\section{Preliminaries}\label{sec:preliminaries}

In this section, we introduce the basic concepts and notations to be used in this article, including the maximum independent set (MIS) problem, and the branching algorithm.

\subsection{Maximum Independent Set (MIS) problem}\label{sec:mis}
% notations about MIS and branching rules
In graph theory, a graph $G$ is represented as a tuple of vertices $V(G)$ and edges $E(G)$.
The neighbors of a vertex $v$ in graph $G$ is $N(v) \subseteq V(G)$ and the neighbors of a set of vertices $S \subseteq V(G)$ is $N(S) = \bigcup_{v \in S} N(v) \setminus S$.
To simplify the discussion, we also introduce the closed neighbor of $S$ as $N[S] = N(S) \cup S$.
Similarly, the $k$-th order neighbors of $v$ is denoted as $N_k(v) = N(N_{k-1}[v])$, where $N_{1}[v]=N[v]$ and $N_k[v] = N_k(v) \cup N_{k-1}[v]$.
An independent set of a graph $G$ is a subset of vertices $I \subseteq V(G)$ such that no two vertices of $I$ are direct neighbors, i.e. $v, w \in I \Rightarrow v \notin N(w)$.
A maximum independent set (MIS) is an independent set of maximum cardinality, and its size is denoted as $\alpha(G)$.
Finding an independent set with size $\alpha(G)$ is known as the maximum independent set problem, which is an NP-hard problem~\cite{Moore2011}.
In the following discussion, we represent a subset of $V$ as a bit string $\bs_{V} \in \{0, 1\}^{|V|}$, where $s_i = 1$ if the $i$-th vertex is in the set, and $s_i = 0$ otherwise.

% In the above definition, a clause $c_i$ is satisfied by a configuration $\bs \in S \subseteq \{0, 1\}^{|V(R)|}$ only if $s$ satisfies all literals in $c_i$. For example, if $c_i = \neg v_1 \land v_2 \land \neg v_3$, then the set represented by $\bs$ satisfies $c_i$ only if $s_{v_1} = 0$, $s_{v_2} = 1$, and $s_{v_3} = 0$.
% With a valid branching rule $\delta$, the branching algorithm can be defined as in \Cref{alg:branching}.

\subsection{The Branching Algorithm}\label{sec:branching-algorithm}

% \jinguo{explain ``value'', a value of a vertex can be 1 or 0, which means the vertex is or isn't in the MIS.}

The branching algorithm is a general algorithmic framework for solving combinatorial optimization problems, including the MIS problem.
It iteratively constructs the solution by breaking the problem down into smaller sub-problems and then solves the sub-problems independently.
To break down the problem of finding an MIS of a given graph $G$, the branching algorithm checks the constraints on a sub-graph $R \subseteq G$ that is represented as a triple of the vertex set $V(R) \subseteq V(G)$, edge set $E(R) \subseteq E(G)$ and the boundary vertices connected to the rest part of the graph $\partial R \subseteq V(R)$.
These constraints determine the specific subset of the $2^{|V(R)|}$ local configurations that require further exploration, referred to as the \emph{relevant} configurations in the subsequent discussion.
A branching strategy decides how to divide the search space to get the relevant configurations explored efficiently.
As we denoted a configuration as a bit string, a branching strategy for bitstring searching can be concisely represented as a boolean formula in disjunctive normal form (DNF):
\begin{definition}[Branching strategy for bitstring searching]\label{def:branching-strategy}
    A branching strategy for bitstring searching denoted as $\delta$, is a function that maps a subgraph $R$ to a boolean formula in disjunctive normal form (DNF) $\mathcal{D} = c_1 \lor c_2 \lor \ldots \lor c_{|\mathcal{D}|}$, where each clause $c_k = l_1\land l_2\land \ldots\land l_{|V(c_k)|}$ is associated with a branch, in which a positive or negative literal $l_i = v$ or $l_i = \neg v$ represents the vertex $v \in V(R)$ is or isn't in the set.
    Here, $V(c_k)$ is the set of vertices involved in the $k$-th clause $c_k$.
\end{definition}
Here, we use a DNF with ${|\mathcal{D}|}$ clauses to divide the solution space into ${|\mathcal{D}|}$ sub-problems, each with a reduced problem size due to fixing the values of variables involved in the clause.
In the independent set problem, the size reduction comes from two aspects: the variables involved in the clause, and the neighbors of the vertices in the set.
Let us denote the vertices associated with a positive literal as $T(c_k) \subseteq V(c_k)$.
Then the problem size reduction should take into account the removal of vertices $V(c_k) \cup N(T(c_k))$ and the branching rule corresponding to $\delta$ is $\alpha(G)=\mathrm{max}(\{\alpha(G \backslash (V(c_k) \cup N(T(c_k)))) + \abs{T(c_k)} \mid c_k \in \mathcal{D} \})$.
To quantify the problem size reduction, we introduce the measure $\rho$ of problem complexity on a graph $G$,
% By fixing the values of some vertices in the subgraph $R$ according to the resulting DNF
% \jinguo{we do not fix value by DNF. In each branch induced by $c_i$, we fix the values of vertices that appear in $c_i$, $V(c_i)$.}
% For simplicity, the DNF may also be represented as a set of clauses, i.e.~$\mathcal{D} = \{c_1, c_2, ..., c_m\}$.
% The vertex $v$ is or isn't in the MIS is denoted as fixing the value of $v$ to 1 or 0.
% \rev{In each branch induced by a clause $c$,~$V(c)$ is the set of vertices included in the clause $c$ and~$N(c)$ is the set of vertices fixed by the clause $c$, including the vertices in $V(c)$ and neighbors of the vertices that are in the MIS.}
% The MIS problem on $G$ is then reduced to the MIS problem on the subgraphs $G \setminus N(c)$.
% A branching rule is \emph{correct} if it does not cut any \emph{essential} configuration, where a configuration is \emph{essential} if removing which may lead to not finding an MIS.
% A graphical illustration of this process is shown in \Cref{fig:branching}.
under which the branching complexity is defined as follows:

\begin{definition}[Branching complexity of MIS]\label{def:branching-complexity}
    Given a graph $G$, a measure of the computational complexity $\rho$ and a branching rule $\mathcal{D} = c_1 \lor c_2 \lor \ldots \lor c_{|\mathcal{D}|}$, the branching complexity $\gamma \geq 1$ for $\mathcal{D}$ is determined by the following relation:
    \begin{equation}
        \gamma^{\rho(G)} = \sum_{i = 1}^{|\mathcal{D}|} \gamma^{\rho(G) - \Delta\rho(c_i)}\;.
    \end{equation}
    where the difference of the measure by the $i$-th clause is defined as $\Delta\rho(c_i) = \rho(G) - \rho(G \setminus (N(T(c_i)) \cup V(c_i)))$, and $\V{b}(\delta, R) = (\Delta\rho(c_1), \Delta\rho(c_2), \ldots, \Delta\rho(c_{|\mathcal{D}|}))$ is the branching vector.
    The optimal branching rule is defined as the one that minimizes branching complexity.
\end{definition}
%It is remarked that different measures can be applied, for example

The branching algorithm induced by the branching strategy $\delta$ is summarized in \Cref{alg:branching}.
Its overall complexity is upper bound by the maximum branching complexity of the branching rules used in the algorithm.
The choice of the complexity measure is highly related to the performance of the algorithm.
In this work, two complexity measures are used, one is the number of vertices in the graph, i.e. $\rho(G) = \abs{V}$, and the other is defined as~$\rho(G) = \sum_{v \in V} \max\{0, d(v) - 2\}$, where~$d(v) = \abs{N(v)}$ is the degree of vertex $v$. In the second measure, a graph with maximum degree $2$ has complexity $0$.
It is because the MIS problem on such a graph can be solved in polynomial time.
Although the vertices selecting strategy \texttt{select\_subgraph} is also important for the performance of the branching algorithm, it will not be the focus of this article.

\begin{algorithm}[!ht]
    \caption{The Branching algorithm for MIS: \texttt{mis\_branch}}
    \label{alg:branching} 
    % \SetAlgoNoLine
    \SetKwFunction{KwFn}{mis\_branch}
    \SetKwProg{Fn}{function}{}{end}
    %\LinesNumbered
    \KwIn{The input graph $G$ and the branching strategy $\delta$}
    \KwOut{The maximum independent set size $\alpha(G)$}
    \Fn{\KwFn{$G$, $\delta$}}{
        $R \leftarrow \texttt{select\_subgraph}(G)$\tcp*{select by some strategy}

        $\mathcal{D} \leftarrow \delta(R)$\;

        $\alpha \leftarrow -\infty$\;

        \ForEach{{\rm clause} $c$ in $\mathcal{D}$}{
            $\alpha_c \leftarrow \texttt{mis\_branch}(G \setminus (V(c) \cup N(T(c))), \delta$) + $|T(c)|$\;

            \If{$\alpha_c > \alpha$}{
                $\alpha \leftarrow \alpha_c$\;
            }
        }
        \Return $\alpha$\;
    }
\end{algorithm}

\section{Numerical Methods for Optimal Branching}\label{sec:optimal-branching}
% \jinguo{diagram for deciding $\delta(R)$: R $\rightarrow$ relevant configurations $\rightarrow$ logic synthesis as set covering problem $\rightarrow$ integer and linear programming $\rightarrow$ solution to set covering as $\delta(R)$}
In this section, we will introduce our method for finding the optimal branching rule.
Since we have formalized the branching rules as a logic expression in DNF in \Cref{def:branching-strategy}, naively, we can search through all \emph{valid} logic expressions in DNF, and then evaluate the branching complexity for each rule to find the optimal one.
In the following, we will give a clear definition of a valid logic branching rule by introducing the concept of the reduced $\alpha$-tensor, the minimum subset of local configurations that need to be explored in the MIS problem.
Finally, we will show how to convert the optimal branching problem to a weighted minimum set covering problem, which can be solved efficiently using linear programming.

\subsection{Boundary-grouped MISs}\label{sec:boundary-grouped-mis}

We first introduce the concept of the \emph{$\alpha$-tensor} of a subgraph $R$ in $G$. 
The $\alpha$-tensor can be interpreted as a generalization of the scalar $\alpha(G)$, which denotes the MIS size of a graph $G$.
\begin{definition}[$\alpha$-tensor~\cite{Liu2024}]\label{def:alpha}
    Let $R$ be a subgraph in $G$ and $\partial R$ its inner boundary vertices.
    The \textit{$\alpha$-tensor} of $R$, $\valpha(R)$, is a tensor of rank $|\partial R|$, whose element $\valpha(R)_{\psigma}$ is  the size of the largest independent set of $R$, while fixing the boundary configuration $\psigma \in \{0,1\}^{|\partial R|}$.
    If the boundary-vertex configuration $\psigma$ itself violates the independent set constraint, the corresponding $\alpha$-tensor element is set to $\valpha(R)_{\psigma} = -\infty$.
\end{definition}

% For a subgraph $R$ of $G$, we denote $\alpha(R)_{\V{s}_{\partial R}}$ as the size of the largest independent set of $R$ under the boundary configuration $\V{s}_{\partial R}\in \{0, 1\}^{|\partial R|}$.
% The tensor $\alpha(R)$ is a rank-$|\partial R|$ tensor, which is denoted as the $\alpha$-tensor of $R$.
% The $\alpha$-tensor can be solved efficiently using the generic tensor network method.
This $\alpha$-tensor is not ideal for designing branching rules, since it unnecessarily contains many elements that are irrelevant to finding an MIS.
In \Cref{sec:alpha-tensor}, we show that these irrelevant elements can be removed without leading to a non-optimal solution.
The reduced $\alpha$-tensor is defined as follows:

\begin{definition}[reduced $\alpha$-tensor]\label{def:reduced-alpha}
    Let $\valpha(R)$ be an $\alpha$-tensor for a subgraph $R$ of $G$, its corresponding reduced $\alpha$-tensor~$\rvalpha(R)$ is defined by setting all entries in $\valpha(R)$ that correspond to configurations that are irrelevant to finding an MIS to $-\infty$.
\end{definition}

% \rev{It should be noted that the definition is only on the theoretical level. In practice, one can only store the effective entries in the reduced $\alpha$-tensor, i.e., the finite-valued elements, as a sparse representation.}
% \jinguo{What are effective entries? What is the point here?}
Note that each finite element in the reduced $\alpha$-tensor is associated with a boundary configuration $\psigma \in \{0, 1\}^{|\partial R|}$,
and each such configuration corresponds to one or more configurations on $V(R)$ with the same local MIS size.
We group all relevant local configurations by the boundary configuration as the \emph{boundary-grouped MISs} $\mathcal{S}_R$:
\begin{definition}[boundary-grouped MISs]
A boundary-grouped MISs on $R$ is a set, each element $S_{\psigma}$ being a set of configurations on $V(R)$ with the same boundary configuration $\psigma$ and the local MIS size $\ralpha(R)_{\psigma}$, i.e.
\begin{equation}\label{eq:configuration-set}
    \begin{split}
    \mathcal{S}_R = \{S_{\V{s}_{\partial R}} \mid \V{s}_{\partial R} \in \{0, 1\}^{|\partial R|}, \ralpha(R)_{\V{s}_{\partial R}} \neq -\infty\},\\
    S_{\V{s}_{\partial R}} = \{\V{s} \mid {\rm Con}(\{\V{s}, \V{s}_{\partial R}\}), \ralpha(R)_{\V{s}_{\partial R}} = \abs{T(\V{s})}\},
    \end{split}
\end{equation}
where ${\rm Con}(\{\V{s}, \V{s}_{\partial R}\})$ denotes that the configuration $\V{s}$ is consistent with the boundary configuration $\V{s}_{\partial R}$.
\end{definition}
%In the reduction process, various rules can be applied.
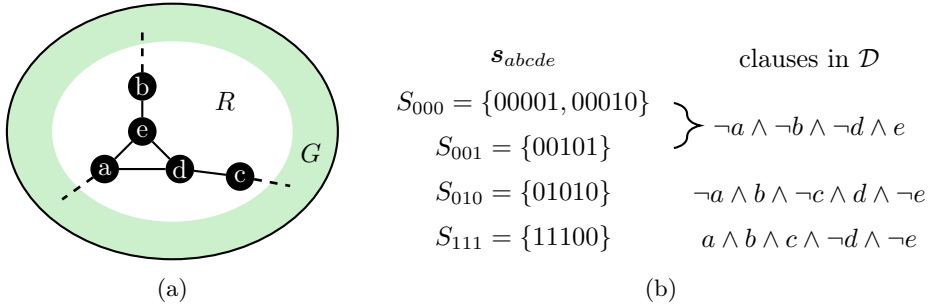
\begin{figure}[ht!]
    \centering
    \begin{subfigure}[b]{0.39\textwidth}
        \centering
        \begin{tikzpicture}[decoration=brace]
            \fill[green!70!black!50,opacity=0.4] (0,0) ellipse (2.2 and 1.7);
            \draw[thick] (0,0) ellipse (2.2 and 1.7); 
        
            \fill[white!30] (-0.0,0) ellipse (1.6 and 1.2);
        
            \node[circle, fill=black, draw=black, thick, minimum size=10, inner sep=0] at (-0.9, -0.5) (a) {\color{white}{a}};
            \node[circle, fill=black, draw=black, thick, minimum size=10, inner sep=0] at (-0.4, 0.6) (b) {\color{white}{b}};
            \node[circle, fill=black, draw=black, thick, minimum size=10, inner sep=0] at (0.9, -0.6) (c) {\color{white}{c}};
            \node[circle, fill=black, draw=black, thick, minimum size=10, inner sep=0] at (0.1, -0.5) (d) {\color{white}{d}};
            \node[circle, fill=black, draw=black, thick, minimum size=10, inner sep=0] at (-0.4, 0.0) (e) {\color{white}{e}};
            \draw[thick] (a) -- (e);
            \draw[thick] (a) -- (d);
            \draw[thick] (b) -- (e);
            \draw[thick] (c) -- (d);
            \draw[thick] (d) -- (e);

            \node[] at (-1.6, -1) (f) {};
            \draw[dashed] (a) -- (f);

            \node[] at (-0.4, 1.5) (g) {};
            \draw[dashed] (b) -- (g);

            \node[] at (1.7, -0.75) (h) {};
            \draw[dashed] (c) -- (h);
        
            \textcolor{black}{\node at (0.7, 0.4) {$R$};}
            \textcolor{black}{\node at (1.84,-0.28) {$G$};}
        \end{tikzpicture}
        \caption{}
    \end{subfigure}
    \begin{subfigure}[b]{0.59\textwidth}
        \centering
        \begin{tikzpicture}[scale = 1]
            % \node[circle, thick] at (0.5, 0) (root) {$\mathcal{D}(R)$};
            \node at (2.5, 1.8) {$\bs_{abcde}$};
            \node at (2.5, 1.2) (node_1) {$S_{000} = \{00001, 00010\}$};
            \node at (2.5, 0.6) (node_2) {$S_{001} = \{00101\}$};
            \node at (2.5, 0.0) (node_3) {$S_{010} = \{01010\}$};
            \node at (2.5, -0.6) (node_4) {$S_{111} = \{11100\}$};

            % \node[text width=3.5cm] at (4, 1.2) (node_1) {$\neg a \land \neg b \land \neg c \land \neg d \land e$ $\neg a \land \neg b \land \neg c \land d \land \neg e$};
            % \node at (4, 0.3) (node_2) {$\neg a \land \neg b \land c \land \neg d \land e$};
            % \node at (4, -0.3) (node_3) {\color{red}{$\neg a \land b \land \neg c \land d \land \neg e$}};
            % \node at (4, -0.9) (node_4) {\color{red}{$ a \land b \land c \land \neg d \land \neg e$}};

            \node at (6.3, 1.8) {clauses in $\mathcal{D}$};
            \draw[decorate, decoration={brace, amplitude=10pt, mirror}, thick] (4.5, 0.6) -- (4.5, 1.2);
            \node[] at (6.3, 0.9) (node_6) {\color{black}{$\neg a \land \neg b \land \neg d \land e$}};
            \node at (6.3, 0.0) (node_3) {\color{black}{$\neg a \land b \land \neg c \land d \land \neg e$}};
            \node at (6.3, -0.6) (node_4) {\color{black}{$ a \land b \land c \land \neg d \land \neg e$}};

          \end{tikzpicture}
        % \caption{The relevant configurations of the subgraph $R$ and the resulting optimal branching rule $\mathcal{D}$.}
        \caption{}
    \end{subfigure}
    \caption{
        Applying a MIS branching strategy on a sub-graph $R$ of the parent graph $G$.
        (a) A sub-graph $R$, with boundary vertices $\partial R = \{a, b, c\}$, where the dashed lines indicate their connections to vertices in the environment $G \backslash R$.
        (b) The left side shows the boundary-grouped MISs $\mathcal{S}_R = \{S_{000}, S_{001}, S_{010}, S_{111}\}$ of the sub-graph $R$. Each row represents a set $S_{\V{s}_{\partial R}}$ of relevant configurations with the boundary configuration $\V{s}_{\partial R}$.
        The right side shows the clauses in the resulting optimal branching $\mathcal{D} = \delta(R)$.
        }
    \label{fig:branching}
\end{figure}

\begin{example}[Reduced $\alpha$-tensor and boundary-grouped MISs]
    The reduced $\alpha$-tensor of the sub-graph shown in \Cref{fig:branching} is shown in the following table. Each row is associated with a boundary-vertex configuration $\psigma \in \{0,1\}^3$.
    The $\alpha$-tensor is shown in the second column, which corresponds to the local MIS size of the sub-graph $R$ under the boundary configuration $\psigma$.
    The third column shows the reduced $\alpha$-tensor, where the irrelevant entries are set to $-\infty$.
    The relevant configurations in the fourth column form the boundary-grouped MISs $\mathcal{S}_R = \{\{00001, 00010\}, \{00101\}, \{01010\}, \{11100\}\}$.

    \centering
    \begin{tabular}{|c|c|c|c|c|}
    \hline
        $\bs_{abc}$ & $\valpha(R)_\psigma$ & $\rvalpha(R)_\psigma$ & $\bs_{abcde}$ & Configuration index\\
        \hline
        000 & $1$ & $1$ & $00001, 00010$ & $1$ \\
        001 & $2$ & $2$ & $00101$ & $2$ \\
        010 & $2$ & $2$ & $01010$ & $3$ \\
        011 & $2$ & $-\infty$ & - & - \\
        100 & $1$ & $-\infty$ & - & - \\
        101 & $2$ & $-\infty$ & - & - \\
        110 & $2$ & $-\infty$ & - & - \\
        111 & $3$ & $3$ & $11100$ & $4$ \\
        \hline
    \end{tabular}
    \captionof{table}{The $\alpha$-tensor (\Cref{def:alpha}) (second column) and the reduced $\alpha$-tensor (\Cref{def:reduced-alpha}) $\rvalpha(R)$ (third column) for the sub-graph $R$ in \Cref{fig:branching}. Each row corresponds to a local MIS size associated with the boundary-vertex configuration $\bs_{abc}$ (first column). The fourth column lists the corresponding relevant configurations producing the tensor elements.}\label{tbl:example-R}    
\end{example}

%  and it corresponding branching table is
% \begin{equation}\label{eq:relevant-config-boolean}
%     \begin{split}
%     \mathcal{S}_R &= \{d_{000}, d_{001}, d_{010}, d_{111}\},\\
%     d_{000} &= (\neg a \land \neg b \land \neg c \land \neg d \land e) \lor (\neg a \land \neg b \land \neg c \land d \land \neg e),\\
%     d_{010} &= \neg a \land b \land \neg c \land d \land \neg e, \\
%     d_{001} &= \neg a \land \neg b \land c \land \neg d \land e, \\
%     d_{111} &=  a \land b \land c \land \neg d \land \neg e.
%     \end{split}
% \end{equation}
% Some of the boundary configuration are absent in the reduced $\alpha$-tensor, since they can be removed without affecting the size of the MIS of $G$.

\begin{theorem}\label{thm:boundary-grouped}
    The boundary-grouped MISs $\mathcal{S}_R$ of a sub-graph $R$ can be obtained in time $\min(O(2^{{\rm tw}(\overline{R})}), O(1.4423^{|V(R)|-\abs{\partial R}}2^{|\partial R|}))$, where $\overline{R}$ is completion of $R$ by adding a vertex that connecting to all the boundary vertices of $R$ and ${\rm tw}(\overline{R})$ is the tree-width of $\overline{R}$.
\end{theorem}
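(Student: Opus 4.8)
The plan is to exhibit two algorithms, one realizing each estimate inside the $\min$, after which the branching routine simply uses whichever is cheaper on the sub-graph at hand. The reduction step that turns $\valpha(R)$ into $\rvalpha(R)$ (and hence $\mathcal{S}_R$) is the same in both cases: once the local MIS sizes are known for every boundary configuration, a single sweep of $O(2^{|\partial R|})$ comparisons applying the domination rule of \Cref{sec:alpha-tensor} discards the irrelevant boundary configurations, so the work is dominated by computing the $\alpha$-tensor together with its optimal witnesses.

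For the bound $O(2^{\mathrm{tw}(\overline R)})$ I would invoke the generic tensor-network representation of the $\alpha$-tensor from \cite{Liu2024}: place a tropical (max-plus) tensor on every vertex of $R$ encoding the independence constraint and the $+1$ reward for selecting that vertex, and contract the network while leaving open one output index per boundary vertex recording its $0/1$ state; the contracted open tensor is exactly $\valpha(R)$. The completion $\overline R$ enters because all $|\partial R|$ open indices must coexist until the contraction finishes: given a tree decomposition of $\overline R$ of width $w$, rooting it at a bag containing the apex vertex (which is adjacent to all of $\partial R$) yields an elimination order in which every intermediate tensor has rank at most $w+1$, hence the contraction runs in $O(2^{w}\,\mathrm{poly}(|V(R)|))$; optimizing over tree decompositions gives $O(2^{\mathrm{tw}(\overline R)})$. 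Running the same contraction over the configuration-tracking semiring of \cite{Liu2024} (or adding a back-substitution pass) returns, for each finite entry, the set of size-optimal local configurations, i.e.\ the groups $S_{\psigma}$, within the same bound up to factors polynomial in the output size.

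For the bound $O(1.4423^{|V(R)|-|\partial R|}\,2^{|\partial R|})$ I would loop over all $2^{|\partial R|}$ boundary configurations $\psigma$. A $\psigma$ that already violates independence is skipped; otherwise force $T(\psigma)$ into the set, delete $T(\psigma)$, its neighbours, and the boundary vertices set to $0$, and let $R'$ be the residual graph, which lives on at most $|V(R)|-|\partial R|$ vertices. A local configuration consistent with $\psigma$ whose selected set is inclusion-maximal corresponds to a maximal independent set of $R'$ together with $T(\psigma)$, and the size-optimal ones are precisely $T(\psigma)$ joined with a maximum independent set of $R'$; so enumerating the maximal independent sets of $R'$ and keeping those of largest cardinality yields $S_{\psigma}$. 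By the Moon--Moser bound $R'$ has at most $3^{|V(R')|/3}\le 1.4423^{|V(R)|-|\partial R|}$ maximal independent sets, and they can be listed with polynomial delay (Tsukiyama--Ide--Ariyoshi--Shirakawa; Johnson--Papadimitriou--Yannakakis), giving total time $O(2^{|\partial R|}\,1.4423^{|V(R)|-|\partial R|}\,\mathrm{poly})$.

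The main obstacle I expect is the first bound: one must argue that keeping the $\partial R$-indices open is captured \emph{exactly} by $\mathrm{tw}(\overline R)$ — rather than by $\mathrm{tw}(R)$ or by the tree-width of $R$ with a clique forced on $\partial R$ — and that the contraction can be scheduled so that no intermediate tensor ever exceeds rank $\mathrm{tw}(\overline R)+1$; one must also confirm that switching from the value-only semiring to the configuration-enumerating one keeps the running time within the stated bound, which is where the dependence on $|\mathcal{S}_R|$ (itself no larger than the second estimate) has to be controlled. The enumeration bound is comparatively routine, the only point needing care being the stated bijection between size-optimal $\psigma$-consistent local configurations and maximum independent sets of the residual graph $R'$.
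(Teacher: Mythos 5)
Your proposal is correct and follows essentially the same two-pronged route as the paper: a tropical tensor-network contraction of $\overline{R}$ for the $O(2^{{\rm tw}(\overline{R})})$ bound, and a loop over the $2^{|\partial R|}$ boundary configurations, each reduced to an MIS computation on at most $|V(R)|-|\partial R|$ vertices, for the second bound. The only cosmetic difference is that the paper obtains the $1.4423^{n}$ factor from the \texttt{mis1} branching algorithm of Fomin--Kratsch, whereas you obtain the same constant from the Moon--Moser bound $3^{n/3}$ together with polynomial-delay enumeration of maximal independent sets; your version is in fact slightly more explicit about producing the full configuration sets $S_{\psigma}$ rather than only the local MIS sizes.
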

\begin{proof}
    The two time complexities are from two methods to obtain the boundary-grouped MISs, the generic tensor network method~\cite{Liu2023} and a branching algorithm respectively.
    The computational complexity of the generic tensor network method is determined by the topology of the tensor network.
    For the independent set problem with open boundary, the complexity of contracting a tensor network is related to the tree-width of the graph $\overline{R}$ as $O(2^{{\rm tw}(\overline{R})})$~\cite{Markov2008}. 
    %With the $\alpha$-tensor, the reduced $\alpha$-tensor can be obtained by removing the irrelevant configurations, and the time is negligible compared to the contraction complexity.
    %By combining the tensor network method with the set algebra or the sum-product expression tree algebra, the boundary-grouped MISs $\mathcal{S}_R$ can be obtained with a similar complexity.
    The boundary-grouped MISs can also be obtained with the \texttt{mis1} algorithm~\cite{Fomin2013}, which has time complexity $O(1.4423^n)$ for a graph with $n$ vertices. For each of the $2^{\abs{\partial R}}$ boundary configurations, a MIS problem of size $\leq |V(R)|-\abs{\partial R}$ is solved, rendering an overall time complexity $O(1.4423^{|V(R)|-\abs{\partial R}}2^{|\partial R|}))$.
\end{proof}

Empirically, the tensor-network-based approach is favored by sparse graphs and geometric graphs, since they have small tree widths~\cite{Fomin2006b}. The branching algorithm is favored by the graph with high connectivity, since the branching complexity on vertex $v$ is related to its degree $d(v)$ as $\gamma \sim d(v)^{1/d(v)}$, i.e. the larger $d(v)$, the smaller $\gamma$ for $d(v) \geq 3$.

% \jinguo{
%     The induced boolean expressions are:
% \begin{equation}\label{eq:relevant-config-boolean}
%     \begin{split}
%     \mathcal{S}_R &= \{d_{000}, d_{001}, d_{110}\},\\
%     d_{000} &= (\neg j \land \neg w \land v \land \neg k \land \neg u) \lor (\neg j \land w \land \neg v \land \neg k \land \neg u),\\
%     d_{001} &= \neg j \land \neg w \land v \land \neg k \land u, \\
%     d_{110} &= j \land \neg w \land \neg v \land k \land \neg u.
%     \end{split}
% \end{equation}
% Then we convert the problem to a set covering problem: finding a DNF that \emph{intersects} with all the expressions in $\mathcal{S}_R$.
% }
% \begin{remark}
%     The configurations in $\mathcal{S}$ can be further reduced by making use of the information of $G \setminus R$, which is not considered in this paper.
% \end{remark}

\subsection{Optimal Branching via Set Covering}\label{sec:optimal-branching-set-covering}

In this section, we show how to obtain the optimal branching rule given the boundary-grouped MISs $\mathcal{S}_R$.
Let $R$ be a sub-graph to be considered. By fixing the boundary vertices configuration $\bs_{\partial R}$, the optimal solutions in $R$ and the rest part of the graph $G\setminus R$ are decoupled, where $G\setminus R$ denotes the induced subgraph of $V(G)\setminus V(R)$. The global maximum independent set can be obtained by taking an arbitrary maximum independent set from each of the two components.
One naive strategy of branching is by boundary configurations in $\mathcal{S}_R$.
For each boundary configuration $\psigma$, we create a branch by fixing the local configurations to an arbitrary configuration in $S_{\V{s}_{\partial R}}$.
Interestingly, although this strategy fully utilizes the sparsity of reduced $\alpha$-tensor, it turns out to be non-optimal with high probability.
To achieve optimality, we must allow some variables in $V(R)$ to be undecided in the current branch.
Recall that a branching rule $\mathcal{D}$ is a boolean formula in DNF, we just need to search the space of a boolean formula in DNF and choose a \emph{valid} one with the lowest branching complexity.
% \jinguo{discuss this definition}

\begin{definition}[Valid branching rule]
    A branching rule $\mathcal{D}$ is valid on $\mathcal{S}_R$ if and only if for any set $S_{\V{s}_{\partial R}} \in \mathcal{S}_R$, there exists a configuration $\bs_{V(R)} \in S_{\V{s}_{\partial R}}$ that satisfies $\mathcal{D}$, denoted as $S_{\V{s}_{\partial R}} \vdash \mathcal{D}$.
\end{definition}

It corresponds to the requirement that for each $S_{\V{s}_{\partial R}} \in \mathcal{S}_R$, at least one of its elements must be explored in one of the branches.
%In the branching algorithm in \Cref{alg:branching}, every true assignment to $\mathcal{D}$ is explored in at least one of the branches, meeting this requirement.
Finding the optimal branching rule via brute-force search is impractical, as the number of boolean formulas in DNF grows super-exponentially as $O(2^{3^{|V(R)|}})$.
In the following, we show that enumerating all valid boolean formulas in DNF can be done efficiently by formulating the problem as a WMSC problem.

\begin{definition}[Weighted minimum set covering problem]\label{def:wmsc}
    A weighted minimum set covering (WMSC) problem is a triple $(U, \mathcal{S}, w)$, where $U$ is a set of elements, $\mathcal{S}$ is a collection of subsets of $U$, and $w$ is a weight function that assigns a non-negative weight to each element in $\mathcal{S}$.
    The goal is to find a subset $I \subseteq \mathcal{S}$ that covers all elements in $U$ and minimizes the total weight of the selected subsets.
\end{definition}

To construct a valid branching rule, we first prepare a set of candidate clauses $\mathcal{C}$.
Here, we emphasize that not all $3^{|V(R)|}$ possible clauses on $V(R)$ can be used in the optimal branching rule, e.g. if clause $c_a$ covers the same set of items in $\mathcal{S}_R$ with less number of literals than clause $c_b$, then $c_a$ can not be used for constructing the optimal branching rule.
One possible way to filter out the infeasible clauses is through \Cref{alg:candidate_clauses}.
The algorithm starts with constructing clauses satisfied by exactly one configuration in one of $S_{\V{s}_{\partial R}} \in \mathcal{S}_R$.
This is achieved by the function $\texttt{single\_cover}$ that takes a configuration~$\V{s}$ as input and returns a clause $c$ given by
$$\bigwedge_{v \in V(R)} (v \text{ if }s_v =1 \text{ else } \neg v ).$$
Then, the algorithm iteratively takes the intersection of the clauses with the previously constructed clauses to form new clauses.
The function $\texttt{intersection}$ takes two clauses as input and returns a new clause that contains the common literals of the two clauses.
For example, $\texttt{intersection}(\neg a \land b \land c, \neg a \land  b \land \neg c \land d) = \neg a \land b$, such that any configuration satisfying the two input clauses will also satisfy the new clause. Although both $\neg a$ and $b$ are also satisfied by the same set of configurations, they are not selected since only the clause with the longest length is kept. This exclusion significantly reduces the number of candidate clauses without sacrificing the optimality of the branching rule.

\begin{algorithm}[ht]
    \caption{Generating candidate clauses: \texttt{candidate\_clauses}}
    \label{alg:candidate_clauses} 
    % \SetAlgoNoLine
    \SetKwFunction{KwFn}{candidate\_clauses}
    \SetKwProg{Fn}{function}{}{end}
    %\LinesNumbered
    \KwIn{The boundary-grouped MISs $\mathcal{S}_R$}
    \KwOut{The candidate clauses~$\mathcal{C}$}
    \Fn{\KwFn{$\mathcal{S}_R$}}{

        $\mathcal{C} \leftarrow \emptyset$\;
        
        $\mathcal{T} \leftarrow \emptyset$\;

        \ForEach{{\rm{set of configurations}} $S$ in $\mathcal{S}_R$}{
            \ForEach{{\rm configuration} $\V{s}$ in $S$}{

                $c \leftarrow \texttt{single\_cover}(\V{s})$\;

                $\mathcal{C} \leftarrow \mathcal{C} \cup \{c\}$\;

                $\mathcal{T} \leftarrow \mathcal{T} \cup \{c\}$\;
            }
        }

        % \jinguo{for each configuration in S, not clause}

        \While{$\mathcal{T} \neq \emptyset$}{
            
            $c \leftarrow \texttt{pop}(\mathcal{T})$\;

            \ForEach{{\rm{set of configurations}} $S$ in $\mathcal{S}_R$}{
                \ForEach{{\rm configuration} $\V{s}$ in $S$}{

                    % \jinguo{intersection is not defined.} \xuanzhao{defined above}

                    $c^\prime = \texttt{intersection}(c, \texttt{single\_cover}(\V{s}))$\;

                    \If{$c^\prime \neq \emptyset$ \rm{and} $c^\prime \notin \mathcal{C}$}{
                        $\mathcal{C} \leftarrow \mathcal{C} \cup \{c^\prime\}$\;

                        $\mathcal{T} \leftarrow \mathcal{T} \cup \{c^\prime\}$\;
                    }
                }
            }
        }

        \Return $\mathcal{C}$\;
    }
\end{algorithm}

Then we represent a candidate solution $\mathcal{D}$ as a disjunction of a subset of clauses in $\mathcal{C}$,
and the subset can be denoted as a bit string $\bx \in \{0, 1\}^{|\mathcal{C}|}$, where we use $x_i = 1$ to denote the $i$-th clause is included in $\mathcal{D}$, and $x_i = 0$ otherwise.
To relate the optimal branching rule with the WMSC problem, we associate each candidate clause $c_i$ with an integer set $J_i = \{j \mid S_j \vdash c_i, j=1,\ldots, |\mathcal{S}_R|\}$ to indicate the elements in $\mathcal{S}_R$ that can be satisfied by $c_i$.
%finding a subset $I\subseteq\{1, 2, \ldots, m\}$ such that $\cup_{i\in I} J_i = \{1, 2, \ldots, n\}$, and that resulting effective branching complexity~$\gamma$ is minimized.
Then finding the branching rule with the minimum complexity can be formulated as the following optimization problem:
\begin{equation}\label{eq:gamma}
    \begin{split}
        \min_{\gamma, \bx} \gamma ~~ \text{ s.t. } & \sum_{i=1}^{|\mathcal{C}|} \gamma^{-\Delta\rho(c_i)} x_i = 1,\\
        & \bigcup\limits_{\substack{i = 1,\ldots, |\mathcal{C}|\\ x_i = 1}} J_i = \{1, 2, \ldots, |\mathcal{S}_R|\}
    \end{split}
\end{equation}
where $\Delta\rho(c_i)$ is the reduction of the problem size in the branch induced by the clause $c_i$.
The first constraint ensures that the branching complexity is the same as the $\gamma$ defined in \Cref{def:branching-complexity} and the second constraint ensures that the branching rule explores all the configurations in $\mathcal{S}_R$ and is valid.
% In practice, instead of solving this optimization problem directly, a dual problem is considered:
To solve \Cref{eq:gamma}, we first consider a dual problem: given a fixed branching complexity $\gamma$, find a solution to $\V{x}$ that satisfies
\begin{equation}\label{eq:set-covering-constraint}
    \min_{\bx} \sum_{i=1}^{|\mathcal{C}|} \gamma^{-\Delta\rho(c_i)} x_i ~~ \text{ s.t. } \bigcup\limits_{\substack{i = 1,\ldots,{|\mathcal{D}|}\\ x_i = 1}} J_i = \{1, 2, \ldots, |\mathcal{S}_R|\}.
\end{equation}
It corresponds to the following WMSC problem (see \Cref{def:wmsc}):
\begin{equation}\label{eq:set-covering}
    (\{1, 2, \ldots, |\mathcal{S}_R|\}, \{J_1, J_2, \ldots, J_{|\mathcal{C}|}\}, i \mapsto \gamma^{-\Delta\rho(c_i)})\;,
\end{equation}
where the weight function is $w(i) = \gamma^{-\Delta\rho(c_i)}$.
With an oracle to solve this problem, we show the minimum branching complexity $\gamma$ can be resolved to very high precision in logarithmic time.

\begin{theorem}\label{thm:fast-convergence}
    Let $R$ be a subgraph of $G$, the branching strategy $\delta(R)$ with the smallest branching complexity $\gamma$ can be approximated to precision $\epsilon$ in time $O(\log(\epsilon^{-1}))$.
\end{theorem}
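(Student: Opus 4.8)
The plan is to treat \Cref{eq:gamma} as a root-finding problem for a single scalar variable $\gamma$ and to exploit monotonicity so that a bisection-type search converges geometrically. First I would define, for a fixed $\gamma > 1$, the quantity $f(\gamma) = \min_{\bx} \sum_{i=1}^{|\mathcal{C}|} \gamma^{-\Delta\rho(c_i)} x_i$, where the minimum is taken over all $\bx \in \{0,1\}^{|\mathcal{C}|}$ satisfying the covering constraint $\bigcup_{x_i = 1} J_i = \{1,\ldots,|\mathcal{S}_R|\}$; this is exactly the optimal value of the WMSC instance \eqref{eq:set-covering}, obtainable from one oracle call. The key structural observation is that $f$ is continuous and strictly decreasing in $\gamma$ on $(1,\infty)$: each weight $\gamma^{-\Delta\rho(c_i)}$ is (weakly) decreasing in $\gamma$ since $\Delta\rho(c_i) \ge 0$, and strictly decreasing whenever $\Delta\rho(c_i) > 0$, which holds for every clause that actually reduces the problem; the pointwise minimum over finitely many feasible $\bx$ of such functions inherits continuity and monotonicity. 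Moreover $f(\gamma) \to $ (number of sets needed in a minimum unweighted cover) $\ge 1$ as $\gamma \to 1^+$ and $f(\gamma) \to 0$ as $\gamma \to \infty$, so there is a unique $\gamma^\star$ with $f(\gamma^\star) = 1$, and by \Cref{def:branching-complexity} this $\gamma^\star$ is precisely the optimal branching complexity.

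Next I would run bisection on $[\gamma_{\mathrm{lo}}, \gamma_{\mathrm{hi}}]$: at each step evaluate $f$ at the midpoint via the WMSC oracle, and keep the half-interval on which the sign of $f(\gamma) - 1$ changes. One needs an a priori bracket: take $\gamma_{\mathrm{lo}} = 1$ (or $1 + \text{something small}$) and $\gamma_{\mathrm{hi}} = 2$, say, since the branching factor on any nontrivial subgraph is bounded above by a universal constant (e.g. at most $2$, and in fact much smaller once $|\mathcal{S}_R| \ge 2$ and clauses reduce the size by at least one), so the initial interval width is $O(1)$. Each bisection step halves the interval, so after $k$ steps the width is $2^{-k} \cdot O(1)$; to reach width $\epsilon$ we need $k = O(\log(\epsilon^{-1}))$ iterations, each costing one oracle call plus $O(|\mathcal{C}|)$ arithmetic to form the weights. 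Hence $\gamma$ is pinned down to precision $\epsilon$ in $O(\log(\epsilon^{-1}))$ oracle calls, which is the claimed bound (treating the WMSC oracle and the clause count as unit/constant cost for the purpose of this theorem, consistent with how the surrounding text frames it).

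I would make two remarks to tighten the argument. First, one should note that the minimizing support $\bx$ need not be the same across different $\gamma$, so $f$ is a lower envelope of finitely many analytic curves and could in principle have kinks — but since each branch of the envelope is continuous and strictly decreasing and the envelope of finitely many such functions is again continuous and strictly decreasing, the intermediate value theorem still applies and the root is unique; no convexity is needed. Second, once the search has localized $\gamma^\star$ to a small interval on which the optimal support $\bx^\star$ is constant, $f$ restricted there is a smooth, strictly monotone function of $\gamma$, so one could optionally switch from bisection to Newton's method for quadratic rather than linear convergence, still within the stated $O(\log(\epsilon^{-1}))$ (in fact better); I would mention this as an optimization but not rely on it.

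The main obstacle I anticipate is not the convergence analysis itself but justifying that the WMSC oracle call at each step genuinely returns the value needed — i.e. that solving the integer program \eqref{eq:set-covering-constraint} for a \emph{fixed} $\gamma$ truly decouples the covering feasibility from the weight optimization, and that the resulting $f(\gamma) = 1$ condition coincides with \Cref{def:branching-complexity} rather than merely bounding it. Establishing this equivalence cleanly (monotone weights $\Rightarrow$ the optimal cover at the critical $\gamma$ is exactly the optimal branching rule, and no valid rule can do better) is the delicate step; the geometric convergence of bisection on a monotone scalar function is then routine.
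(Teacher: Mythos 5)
Your proposal is correct and follows essentially the same route as the paper: the paper defines a monotone step indicator $\sigma(\gamma)=\mathbf{1}[\min_{\bx}\sum_i \gamma^{-\Delta\rho(c_i)}x_i\le 1]$ (your $f(\gamma)-1$ sign test) and locates its transition point by bisecting $[1,2]$ with one WMSC oracle call per step, giving $O(\log(\epsilon^{-1}))$ iterations. Your added discussion of continuity, strict monotonicity of the lower envelope, and the bracketing endpoints is more detailed than the paper's two-line argument but does not change the method.
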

\begin{proof}
    We define an indicator function $\sigma(\gamma)$ that returns $1$ if the WMSC problem in \Cref{eq:set-covering} renders a valid solution, and $0$ otherwise:
    \begin{equation}
        \sigma(\gamma) = \begin{cases}
        1, & \text{ if } \exists \V{x}^*,~\text{s.t. }\parbox[t]{.6\textwidth}{$\sum_{i=1}^{|\mathcal{C}|} \gamma^{- \Delta\rho(c_i^*)} x_i^* \leq 1$, where\\ $\V{x}^*$ satisfying the constraints in  \Cref{eq:set-covering}\\ 
        }\\
        0, & \text{ otherwise}.
        \end{cases}
    \end{equation}
    $\sigma(\gamma)$ is a step function that monotonically increases with $\gamma$, the transition point of which can be found by bisecting the interval $[1, 2]$ in $O(\log(\epsilon^{-1}))$ time. This transition point is the minimum branching complexity $\gamma$.
\end{proof}

The \Cref{thm:fast-convergence} provides an logarithmic time method to obtain an approximation of the minimum branching complexity $\gamma$.
However, an exact solution is desired in practice to ensure the optimality of the branching rule.
In practice, we utilize a fixed point iteration method to directly obtain the exact solution of the WMSC problem, allowing us to converge on the minimum value of $\gamma$. This approach is preferred over the logarithmic time method outlined in \Cref{thm:fast-convergence}.
The corresponding algorithm is summarized in \Cref{alg:opt_branching}. 
The function \texttt{boundary\_grouped} returns the boundary-grouped MISs for $R$, which can be implemented with the generic tensor network method or the most basic branching algorithm as discussed in \Cref{thm:boundary-grouped}.
The function $\texttt{wmsc\_solver}$ solves the WMSC problem in \Cref{eq:gamma}. Although this problem is NP-complete, it can be solved efficiently in practice by reducing it to an integer programming problem as will be discussed in the following section.
% In practice, we employ a fixed point iteration method to converge on the minimum value of $\gamma$, rather than relying on the logarithmic time method described in \Cref{thm:fast-convergence}. 
In each iteration, the solver provides a valid solution $\V{x}$ for the WMSC problem, parameterized by $\gamma_{\rm old}$. Subsequently, we determine a new value of $\gamma \leq \gamma_{\rm old}$ that satisfies the equation $\sum_{i = 1}^{|\mathcal{D}|} {\gamma}^{-\Delta \rho(x_i)} x_i = 1$, using a root-finding subroutine. Since the left-hand side of this equation is monotonically decreasing within the interval $[1, 2]$, we can guarantee the existence and uniqueness of the root. We then update $\gamma_{\rm old}$ with this new value of $\gamma$ and proceed to the next iteration. The minimum value of $\gamma$ is precisely identified when both $\gamma$ and $\V{x}$ converge to the same value during the fixed point iteration. For a comprehensive proof of convergence, please refer to \Cref{sec:fixed_point}.
% \jinguo{TODO: show a detailed proof in the appendix.}
% \yijia{[YJ: If necessary, maybe I can attempt to write a proof with O(1) complexity?]}\jinguo{Interesting, that would be great.}\yijia{[YJ: I sincerely apologize for the oversight! I have discovered that I made a mistake in my previous construction of the proof, specifically neglecting the requirement of consistency for the condition ``iteration step size is greater than a constant'' to hold for all $\gamma$. As a result, I am currently unable to provide a valid proof. In fact, I am inclined to believe that the convergence time to the exact fixed point cannot be guaranteed here.]}
% \xuanzhao{I added a proof and an example in the appendix, did not prove $O(1)$ complexity but showed the convergence.}

\begin{algorithm}[!ht]
    \caption{The optimal branching strategy: $\delta$}
    \label{alg:opt_branching} 
    % \SetAlgoNoLine
    \SetKwFunction{KwFn}{$\delta$}
    \SetKwProg{Fn}{function}{}{end}
    %\LinesNumbered
    \KwIn{The input graph $G$ and a sub-graph $R \subseteq G$}
    \KwOut{The optimal branching $\mathcal{D}$ on $R$}

    \Fn{\KwFn{G, R}}{
        $\mathcal{S} \leftarrow \texttt{boundary\_grouped}(R)$\;

        $\mathcal{C} \leftarrow \texttt{candidate\_clauses}(\mathcal{S})$\;

        $\V{J} \leftarrow \{\{j \mid S_j \vdash c_i, j=1,\ldots, |\mathcal{S}_R|\} \mid c_i \in \mathcal{C}\}$\;

        %$\V{x} \leftarrow \texttt{minimize\_gamma}(\V{J}, \mathcal{C})$\;
        \tcp{search for the optimal branching complexity $\gamma$}
        $\gamma \leftarrow 2$
        
        $\gamma_{old} \leftarrow \infty$\;

        \While{$\gamma < \gamma_{old}$}{

            \tcp{The triple $(\{1, \ldots, {|\mathcal{S}_R|}\}, \V{J}, i \mapsto \gamma^{-\Delta\rho(c_i)})$ defines a WMSC problem}
            $\V{x} \leftarrow \texttt{wmsc\_solver}(\{1, \ldots, {|\mathcal{S}_R|}\}, \V{J}, i \mapsto \gamma^{-\Delta\rho(c_i)})$\;

            $\gamma_{old} \leftarrow \gamma$\;

            $\gamma \leftarrow \texttt{find\_root}(\sum_{i=1}^{|\mathcal{C}|} \gamma^{-\Delta\rho(c_i)} x_i - 1 = 0)$\\
        }

        $\mathcal{D} \leftarrow \mathop{\bigvee}\limits_{\substack{i=1,\dots, {|\mathcal{C}|}\\ x_i=1}}c_i$\;

        \Return $\mathcal{D}$\;
    }

\end{algorithm}

\begin{example}[Optimal branching]
    Continuing the previous example, we consider how to obtain the optimal branching for the boundary-grouped MISs in \Cref{tbl:example-R}. 
    For simplicity, here we use the number of vertices as the measure, i.e. $\Delta\rho(c_i) = |V(c_i) \cup N(T(c_i))|$ and ignore the removal of vertices outside of $R$, which provides a lower bound for the reduction.
    Then we list the corresponding candidate clauses in \Cref{tbl:example-branching}.
    It can be easily verified that the solution of the WMSC problem in \Cref{eq:set-covering} is $c_4 \lor c_5 \lor c_7$.
    It is a valid cover of all configuration indices: $J_4 \cup J_5 \cup J_7 = \{1, 2, 3, 4\}$, and the corresponding branching complexity is the root of the following equation:
    \begin{equation}
        \gamma^{-5} + \gamma^{-5} + \gamma^{-4} = 1,
    \end{equation}
    which is $\gamma \approx 1.2672$.
    % Then the corresponding branches are given by:
    % \begin{equation}\label{eq:integer-prog}
    %     \alpha(G) = \max\left\{
    %         \begin{matrix}
    %             \alpha(G \setminus (\{a, b, d\} \cup N[e])) + 1\\
    %             \alpha(G \setminus (\{a, c, e\} \cup N[b] \cup N[d])) + 2\\
    %             \alpha(G \setminus (\{d, e\} \cup N[a] \cup N[b] \cup N[c])) + 3
    %         \end{matrix}\right\}\;.
    % \end{equation}
    
    \centering
    \begin{tabular}{|c|c|c|c|}
    \hline
        $i$ & $J_i$ & $c_i$ & $\Delta\rho(c_i)$\\
        \hline
        % The script to generate this table is in gitlab repo, jcode/graph_mis/example/example_article.jl
        $1$ & $\{1\}$ & $ \neg a \land  \neg b \land  \neg c \land  \neg d \land  e$ & $5$ \\
        $2$ & $\{1\}$ & $ \neg a \land  \neg b \land  \neg c \land  d \land  \neg e$ & $5$ \\
        $3$ & $\{2\}$ & $ \neg a \land  b \land  \neg c \land  d \land  \neg e$ & $5$ \\
        $4$ & $\{3\}$ & $ \neg a \land  \neg b \land  c \land  \neg d \land  e$ & $5$ \\
        $5$ & $\{4\}$ & $ a \land  b \land  c \land  \neg d \land  \neg e$ & $5$ \\
        $6$ & $\{1, 2\}$ & $ \neg a \land  \neg c$ & $2$ \\
        $7$ & $\{1, 2\}$ & $ \neg a \land  \neg c \land  d \land  \neg e$ & $4$ \\
        $8$ & $\{1, 3\}$ & $ \neg a \land  \neg b \land  \neg d \land  e$ & $4$ \\
        $9$ & $\{1, 3\}$ & $ \neg a \land  \neg b$ & $2$ \\
        $10$ & $\{2, 4\}$ & $ b \land  \neg e$ & $2$ \\
        $11$ & $\{3, 4\}$ & $ c \land  \neg d$ & $2$ \\
        $12$ & $\{1, 2, 3\}$ & $ \neg a$ & $1$ \\
        $13$ & $\{1, 2, 4\}$ & $ \neg e$ & $1$ \\
        $14$ & $\{1, 3, 4\}$ & $ \neg d$ & $1$ \\
        \hline
    \end{tabular}
    \captionof{table} {
        The candidate clauses for the boundary-grouped MISs in \Cref{tbl:example-R}.
        The first column is the index of the candidate clause, the second column is the set of configuration indices (see the last column of \Cref{tbl:example-R}) covered by the clause, the third column is the boolean expression of the clause, and the fourth column is the reduction of measure $\Delta\rho(c_i)$ by the corresponding clause.
    }\label{tbl:example-branching}

\end{example}

\subsection{Solving WMSC Problem via Integer Programming}\label{sec:wmsc-integer-programming}

In this section, we will show how to solve the WMSC problem in \Cref{def:wmsc}.
Let us consider the specific WMSC problem defined in \Cref{eq:set-covering}, we can convert it to an integer programming problem:
\begin{equation}\label{eq:integer_programming}
    \begin{split}
        \min &\sum_{i=1}^{|\mathcal{C}|} \gamma^{-\Delta\rho(c_i)} x_i,\\
        \text{s.t.} &\sum_{i=1, j\in J_i}^{|\mathcal{C}|} x_i \geq 1, \forall j = 1, 2, \ldots, |\mathcal{S}_R|,\\
        &x_i \in \{0, 1\}, \forall i = 1, 2, \ldots, {|\mathcal{C}|}.
    \end{split}
\end{equation}
The boolean variables $x_i$ indicate whether the set $J_i$ is chosen or not in the WMSC problem. The objective function is the sum of the weights of the selected sets, and the linear constraints ensure that each element in $\mathcal{S}_R$ is covered by at least one selected set.
Although the integer programming problem is NP-hard, it can be effectively addressed in practice using the Julia package \texttt{JuMP.jl}~\cite{Dunning2017,Lubin2023}, which utilizes \texttt{SCIP}~\cite{Achterberg2008, Achterberg2009} as its backend. This combination is recognized as one of the fastest non-commercial solvers for mixed integer programming (MIP).
Typically, it enables us to solve a WMSC problem exactly with about $10^3$ sets in less than~$10^{-2}$ seconds.
However, it is mainly based on the branch-and-bound algorithm, so that is exponential in time in the worst case.
In cases involving larger-scale problems, the integer programming formulation can be relaxed to a linear programming problem by substituting the binary variables $x_i$ with continuous variables constrained to the range $0 \leq x_i \leq 1$:
\begin{equation}
    \begin{split}
        \min &\sum_{i=1}^{|\mathcal{C}|} \gamma^{-\Delta\rho(c_i)} x_i,\\
        \text{s.t.} &\sum_{i=1, j\in J_i}^{|\mathcal{C}|} x_i \geq 1, \forall j = 1, 2, \ldots, |\mathcal{S}_R|,\\
        &0 \leq x_i \leq 1, \forall i = 1, 2, \ldots, {|\mathcal{C}|}. \\
    \end{split}
\end{equation}
The solution~$\V{x}$ of the linear programming problem can be interpreted as probability, and we randomly pick the clauses in $\mathcal{C}$ that satisfy all constraints.
It turns out to be a good approximation to the integer programming problem as we will show in the \Cref{sec:numerical-results}.
Unlike integer programming, a linear programming problem can be solved in time polynomial to the problem size~\cite{Cohen2021}.
In our implementation, we use the simplex algorithm~\cite{nabli2009overview} to solve the linear programming problem provided by the \texttt{HiGHS}~\cite{huangfu2018parallelizing} solver.

\section{Branching Rule Discovery}\label{sec:rule-discovery}

In this section, we will show how our method can be applied to automatic rule discovery with a few examples, including recovering the established rules and discovering new rules.
We show how the optimal branching rule discovery can help theorem proving by improving the branching rules for some bottleneck cases.

\subsection{Rediscovery of established rules}

Existing branching algorithms are constructed based on a variety of manually derived heuristic rules.
As one of them, the domination rule is an important rule for solving the MIS problem, which is also part of the \texttt{mis2} algorithm in Ref.~\cite{Fomin2013}.

\begin{lemma}[The Domination rule]
    Let~$G = (V, E)$ be a graph, $v$ and~$w$ be adjacent vertices of~$G$ such that~$N[v] \subseteq N[w]$.
    Then 
    \begin{equation}
        \alpha(G) = \alpha(G \setminus w)\;.
    \end{equation}
\end{lemma}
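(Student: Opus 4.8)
The plan is to show the two inequalities $\alpha(G) \geq \alpha(G \setminus w)$ and $\alpha(G) \leq \alpha(G \setminus w)$ separately. The first direction is immediate: any independent set of $G \setminus w$ is also an independent set of $G$, since deleting a vertex only removes constraints, so $\alpha(G) \geq \alpha(G \setminus w)$ with no hypotheses needed.

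For the reverse direction, I would take a maximum independent set $I$ of $G$ with $|I| = \alpha(G)$ and argue that we may assume $w \notin I$, which gives $I \subseteq V(G \setminus w)$ and hence $\alpha(G \setminus w) \geq |I| = \alpha(G)$. If $w \notin I$ already, we are done. If $w \in I$, the key step is to produce a replacement independent set of the same size that avoids $w$. Since $v \in N(w) \subseteq N[w]$ and $w \in I$, the vertex $v$ is not in $I$. Consider $I' = (I \setminus \{w\}) \cup \{v\}$. This has the same cardinality as $I$. I would then verify $I'$ is independent: the only possible new conflicts involve $v$, so I need every neighbor of $v$ to lie outside $I'$. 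Take $u \in N(v)$. By the domination hypothesis $N[v] \subseteq N[w]$, so $u \in N[w]$; since $u \neq w$ would place $u \in N(w)$, and if $u = w$ then $u \notin I'$ by construction. In the case $u \in N(w)$, independence of $I$ together with $w \in I$ forces $u \notin I$, hence $u \notin I \setminus \{w\} \subseteq I'$, and $u \neq v$ so $u$ is not the newly added vertex either. Thus $I'$ is an independent set of size $\alpha(G)$ not containing $w$, completing the argument.

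I do not expect a genuine obstacle here; this is a classical exchange argument. The one point that needs a little care is the edge case $u = w$ when checking neighbors of $v$ — one must remember that $w$ was explicitly removed — and the observation that $v \notin I$ follows from $v$ being adjacent to $w \in I$, which is exactly why the hypothesis that $v$ and $w$ are \emph{adjacent} is used. It is worth remarking that the adjacency of $v$ and $w$ is what guarantees $v$ is available as a substitute; without it the statement would still hold by a similar (even simpler) argument, but the given proof uses it to streamline the case analysis.
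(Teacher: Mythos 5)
Your proof is correct: the exchange argument (replace $w$ by $v$ in any maximum independent set containing $w$, and use $N[v]\subseteq N[w]$ to check that $v$ introduces no new conflicts) is the standard and complete way to establish the domination rule in full generality. The paper, however, does not prove the lemma this way --- in fact it does not give a classical proof at all. It cites the rule as a known reduction from the literature and instead \emph{rediscovers} it with its own machinery: for the concrete subgraph of its Figure~3 it computes the reduced $\alpha$-tensor and the boundary-grouped MISs, observes that every relevant configuration group satisfies the single clause $\neg w$, and concludes that the optimal branching rule degenerates to the one-branch DNF $\neg w$, i.e.\ ``discard $w$.'' Your argument is the one that actually proves the lemma for all graphs; the paper's computation is a verification on one instance, meant to showcase that the automated framework recovers expert-designed rules rather than to supply a proof. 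One small remark on your closing comment: the adjacency of $v$ and $w$ is not really an independent hypothesis, since $v\in N[v]\subseteq N[w]$ together with $v\neq w$ already forces $v\in N(w)$; so there is no weaker ``non-adjacent'' version of the statement to consider --- the hypothesis is stated for emphasis, and your use of it (to conclude $v\notin I$ when $w\in I$) is exactly right.
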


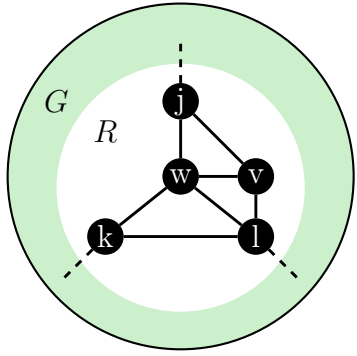
\begin{figure}[ht!]
    \centering
    \begin{tikzpicture}
    \fill[green!70!black!50,opacity=0.4] (0,0) ellipse (2.3 and 2.3);
            \draw[thick] (0,0) ellipse (2.3 and 2.3); 
        
            \fill[white!30] (-0.0,-0.15) ellipse (1.65 and 1.65);
        \node[circle, fill=black, draw=black, thick, minimum size=13,inner sep = 0,font=\fontsize{13pt}{12pt}\selectfont] at (0, 0) (w) {\color{white}{w}};
        \node[circle, fill=black, draw=black, thick, minimum size=13,inner sep = 0,font=\fontsize{13pt}{12pt}\selectfont] at (1, 0) (v) {\color{white}{v}};
        \node[circle, fill=black, draw=black, thick, minimum size=13,inner sep = 0,font=\fontsize{13pt}{12pt}\selectfont] at (0, 1) (j) {\color{white}{j}};
        \node[circle, fill=black, draw=black, thick, minimum size=13,inner sep = 0,font=\fontsize{13pt}{12pt}\selectfont] at (-1, -0.8) (k) {\color{white}{k}};
        \node[circle, fill=black, draw=black, thick, minimum size=13,inner sep = 0,font=\fontsize{13pt}{12pt}\selectfont] at (1, -0.8) (l) {\color{white}{l}};
        \draw[line width=1.1pt] (j) -- (w);
        \draw[line width=1.1pt] (j) -- (v);
        \draw[line width=1.1pt] (w) -- (v);
        \draw[line width=1.1pt] (w) -- (k);
        \draw[line width=1.1pt] (w) -- (l);
        \draw[line width=1.1pt] (k) -- (l);
        \draw[line width=1.1pt] (v) -- (l);

        \node[] at (0, 1.9) (m) {};
        \draw[dashed,line width=1.1pt] (j) -- (m);
        \node[] at (-1.7, -1.5) (n) {};
        \draw[dashed,line width=1.1pt] (k) -- (n);
        \node[] at (1.7, -1.5) (o) {};
        \draw[dashed,line width=1.1pt] (l) -- (o);
        \textcolor{black}{\node[font=\fontsize{12pt}{12pt}\selectfont] at (-1.0, 0.6) {$R$};}
            \textcolor{black}{\node[font=\fontsize{12pt}{12pt}\selectfont] at (-1.65,1) {$G$};}
    \end{tikzpicture}
    \caption{
        An example subgraph, where the $j$, $k$, $l$ are the boundary vertices.
        It satisfies the condition of domination rule, where~$N[v] \subseteq N[w]$.
    }
    \label{fig:domination}
\end{figure}

We will show how the optimal branching can automatically capture this rule by considering the subgraph shown in \Cref{fig:domination}.
Observing that $N[v] \subseteq N[w]$, according to the domination rule, the vertex~$w$ can be discarded.
In our method, we first generate the reduced $\alpha$-tensor and the boundary-grouped candidate local MISs, which is given in \Cref{tbl:domination}.
These configurations render a WMSC problem, and by solving it, we obtain the optimal DNF $\neg w$ since $S_{s_{jkl}} \vdash \neg w$ for all effective $s_{jkl}$, which is consistent with the domination rule.

\begin{table}[ht!]
    \centering
    \begin{tabular}{|c|c|c|}
    \hline
        $\bs_{jkl}$ & $\rvalpha(R)_\psigma$ & $\bs_{wvjkl}$\\
        \hline
        000 & 1 & 01000, 10000 \\
        010 & 2 & 01010 \\
        101 & 2 & 00101 \\
        111 & 3 & 00111 \\
        \hline
    \end{tabular}
    \caption{The finite-valued elements of reduced $\alpha$-tensor $\rvalpha(R)$ for $R$ in~\Cref{fig:domination}.}
\label{tbl:domination}    
\end{table}

\subsection{Discovery of optimal rules}
%Existing branching algorithms often depend on heuristic rules that fail to ensure optimality across all scenarios, leading to increased complexity. 
In this subsection, we will show how our method can automatically discover the optimal branching rule for a given subgraph, resulting in lower branching complexity compared to the state-of-the-art branching algorithm in Ref.~\cite{issac20131}.
We will use the example of PH2 (comprising a \textbf{p}entagon and a \textbf{h}exagon sharing \textbf{2} edges) in Ref.~\cite{issac20131} to demonstrate that our method achieves a better $\gamma$ than their heuristic rules.
As illustrated in Fig.~\ref{fig:ph2}, PH2 is a subgraph comprising 8 vertices $a$-$h$, where the vertices $a$, $c$, $d$, $f$, $g$ and $h$ serve as $\partial R$.
For such 3-regular graphs, we applied a commonly used measure given by~$\rho(G) = \sum_{v \in V} \max\{0, d(v) - 2\}$, since the vertices with $d(v) = 1, 2$ can be directly removed via graph rewriting~\cite{xiao2013}.
Here, we emphasize that the problem size reduction $\Delta\rho(c_i)$ should also take into account the vertices in $N_2[R]$, since the possible removal for vertices in $N_1[R]$ may decrease the degrees for vertices in $N_2(R)$. 
%Therefore, although we fix $R = \text{PH2}$, its environment $N_2[R]$ are still needed to be taken into account to obtain $\Delta\rho(c_i)$ for each clause $c_i$. 
%Therefore, we also take the environment $N_2[R]$ into account to obtain $\Delta\rho(c_i)$ for each clause $c_i$.
As an example, we talk about a neighborhood where no well-established rules exist, assume that the vertices within $N_1(R)$ are neither connected to each other nor share common neighbors, and that each vertex in $N_2[R]$ has a degree of 3. 
Without losing generality, we use the tree-like environment $N_3[R]$ shown in Fig.~\ref{fig:ph2}.
It has been shown that for this subgraph, the heuristic rule~\cite{issac20131} obtained manually can reach a branching complexity of $1.0718$, with a branching vector of $\{10,10\}$.

\begin{figure}[ht!]
    \centering
    \begin{tikzpicture}[decoration=brace,scale=0.7]

    \fill[green!70!black!50,opacity=0.4] (0,0) ellipse (4.5 and 4.8);
    \fill[white] (0,0) ellipse (3.4 and 3.6);
    \draw[thick] (0,0) ellipse (4.5 and 4.8); 
        \fill[yellow!90!black!80,opacity=0.5] (0,0) ellipse (3.4 and 3.6);
        % \draw[thick] (0,0) ellipse (3.4 and 3.4); 
    
        \fill[white!30] (0,0) ellipse (2 and 2);
    
        \node[circle, fill=black, draw=black, thick, minimum size=10, inner sep=0] at (-0.0, 0.3) (a) {\color{white}{a}};
        \node[circle, fill=black, draw=black, thick, minimum size=10, inner sep=0] at (-0.8, -0.5) (b) {\color{white}{b}};
        \node[circle, fill=black, draw=black, thick, minimum size=10, inner sep=0] at (0.8, -0.5) (e) {\color{white}{e}};
        \node[circle, fill=black, draw=black, thick, minimum size=10, inner sep=0] at (-0.8, -1.4) (c) {\color{white}{c}};
        \node[circle, fill=black, draw=black, thick, minimum size=10, inner sep=0] at (0.8, -1.4) (d) {\color{white}{d}};
        \node[circle, fill=black, draw=black, thick, minimum size=10, inner sep=0] at (-1.4, 0.7) (f) {\color{white}{f}};
        \node[circle, fill=black, draw=black, thick, minimum size=10, inner sep=0] at (1.4, 0.7) (h) {\color{white}{h}};
        \node[circle, fill=black, draw=black, thick, minimum size=10, inner sep=0] at (-0.0, 1.7) (g) {\color{white}{g}};
        \draw[thick] (a) -- (b);
        \draw[thick] (a) -- (e);
        \draw[thick] (b) -- (c);
        \draw[thick] (e) -- (d);
        \draw[thick] (c) -- (d);
         \draw[thick] (b) -- (f);
          \draw[thick] (e) -- (h);
           \draw[thick] (g) -- (f);
            \draw[thick] (g) -- (h);
\newcommand{\treelike}{
  \begin{scope}
        \node[circle, fill=white, draw=black, thick, minimum size=7, inner sep=0] at (-0.0, -0.4)(k2) {};
        \node[circle, fill=white, draw=black, thick, minimum size=7, inner sep=0] at (-0.52, -0.7) (k3) {};
        \node[circle, fill=white, draw=black, thick, minimum size=7, inner sep=0] at (0.52, -0.7) (k4) {};
        \node[circle, fill=white, draw=black, thick, minimum size=7, inner sep=0] at (-0.22, -1.124) (k5) {};
        \node[circle, fill=white, draw=black, thick, minimum size=7, inner sep=0] at (-0.82, -1.124) (k6) {};
        \node[circle, fill=white, draw=black, thick, minimum size=7, inner sep=0] at (0.22, -1.124) (k7) {};
        \node[circle, fill=white, draw=black, thick, minimum size=7, inner sep=0] at (0.82, -1.124) (k8) {};
        % \node[] at (-0.6, -2.1) (k9) {};
        % \node[] at (-1.4, -2.1) (k10) {};
        % \node[] at (0.6, -2.1) (k11) {};
        % \node[] at (1.4, -2.1) (k12) {};
         \node[] at (-0.22, -2.1) (k9) {};
        \node[] at (-0.82, -2.1) (k10) {};
        \node[] at (0.22, -2.1) (k11) {};
        \node[] at (0.82, -2.1) (k12) {};
            \draw[thick] (k2) -- (k3);
    \draw[thick] (k2) -- (k4);
    \draw[thick] (k3) -- (k5);
    \draw[thick] (k3) -- (k6);
    \draw[thick] (k4) -- (k7);
    \draw[thick] (k4) -- (k8);
    \draw[dashed] (k5) -- (k9);
    \draw[dashed] (k6) -- (k10);
    \draw[dashed] (k7) -- (k11);
    \draw[dashed] (k8) -- (k12);
  \end{scope}
}
     \begin{scope}[xshift=-29,yshift=-45,rotate=-45]
    \treelike
    \draw[thick] (c) -- (k2);
  \end{scope}

  \begin{scope}[xshift=29,yshift=-45,rotate=45]
    \treelike
    \draw[thick] (d) -- (k2);
  \end{scope}  

   \begin{scope}[xshift=-48,yshift=20,rotate=-100]
    \treelike
    \draw[thick] (f) -- (k2);
  \end{scope} 
\begin{scope}[xshift=48,yshift=20,rotate=100]
    \treelike
    \draw[thick] (h) -- (k2);
  \end{scope}
  \begin{scope}[xshift=-10,yshift=54,rotate=210]
    \treelike
    \draw[thick] (g) -- (k2);
    
  \end{scope}
   \begin{scope}[xshift=20,yshift=52,rotate=-210]
    \treelike
    \draw[thick] (a) -- (k2);
    \coordinate (k12) at (1,3.6);
  \end{scope}

        \textcolor{black}{\node at (1.5, -0.5) {$R$};}
        \textcolor{black}{\node at (2.5, -1) {$N_3[R]$};}
        \textcolor{black}{\node at (3.6,-1.5) {$G$};}
    \end{tikzpicture}
    \caption{
        The example of PH2 (composed of a \textbf{p}entagon and a \textbf{h}exagon sharing \textbf{2} edges) with a tree-like neighborhood. The vertices connected by dashed lines indicate their connections to vertices in the further environment.
        } 
    \label{fig:ph2}
\end{figure}
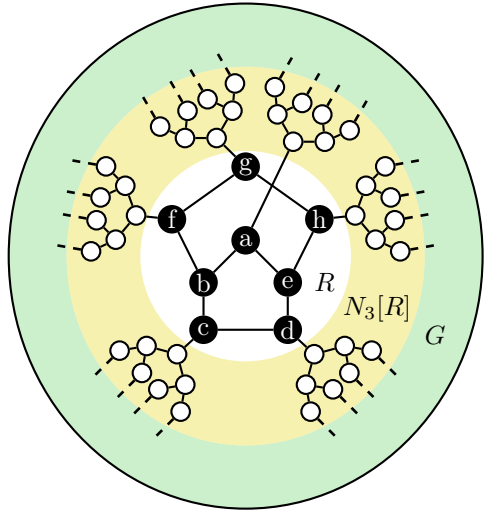

\begin{table}[ht!]
    \centering
    \begin{tabular}{|c|c|c|c|}
    \hline
        $\bs_{acdfgh}$ & $\rvalpha(R)_\psigma$ & $\bs_{abcdefgh}$ & Configuration index\\
        \hline
        010100 & 3 & 00101100 & 1 \\
        000010 & 3 & 01001010 & 2 \\
        001001 & 3 & 01010001 & 3 \\
        110101 & 4 & 10100101 & 4 \\
        101101 & 4 & 10010101 & 5 \\
        \hline
    \end{tabular}
    % \caption{The finite-valued elements of reduced $\alpha$-tensor $\rvalpha(R)$ for $R=$ PH2 in Fig.~\ref{fig:ph2}. Each row in the second column corresponds to a local MIS size associated with the boundary-vertex configuration $\psigma \in \{0,1\}^6$ in the first column, and the third column lists the corresponding relevant configurations producing the tensor elements.}
    \caption{The finite-valued elements of reduced $\alpha$-tensor $\rvalpha(R)$ for $R=$ PH2 in Fig.~\ref{fig:ph2}.}
\label{tbl:ph2}    
\end{table}

\begin{table}[ht!]
    \centering
    \begin{tabular}{|c|c|c|c|}
    \hline
        $i$ & $J_i$ & $c_i$ & $\Delta\rho(c_i)$\\
        \hline
        $1$ & $\{1\}$ & $\lnot a \land \lnot b \land c \land \lnot d \land e \land  f \land \lnot g \land \lnot h$ & 18 \\
        $2$ & $\{2\}$ & $\lnot a \land b \land \lnot c \land \lnot d   \land e \land  \lnot f \land g \land \lnot h$ &  16 \\
        $3$ & $\{3\}$ & $\lnot a \land b \land \lnot c \land d \land \lnot e \land  \lnot f \land \lnot g \land h$ & 18 \\
        $4$ & $\{4\}$ & $a \land \lnot b \land c \land \lnot d \land \lnot e \land  f \land \lnot g \land h$ & 22 \\
        $5$ & $\{5\}$ & $a \land \lnot b \land \lnot c \land d \land \lnot e \land  f \land \lnot g \land h$ &  22 \\
        $6$ & $\{1,2\}$ &  $\lnot a \land \lnot d \land e \land \lnot h$ & 10 \\
        $7$ & $\{1,3\}$ & $\lnot a \land \lnot g$ & 8 \\
        $8$ & $\{1,4\}$ & $\lnot b \land c  \land \lnot d\land f \land \lnot g$ & 16 \\
        $9$ & $\{2,3\}$ & $\lnot a\land b  \land \lnot c\land \lnot f$ & 10 \\
        $10$ & $\{3,5\}$ & $\lnot c\land  d  \land \lnot e\land \lnot g \land h$ & 16 \\
       $11$ &  $\{4,5\}$ & $a\land \lnot b  \land \lnot e\land  f \land \lnot g \land h$ & 18 \\
        $12$ & $\{1,2,3\}$ & $\lnot a$ & 4 \\
        $13$ & $\{1,2,4\}$ & $\lnot d$ & 4 \\
        $14$ & $\{1,4,5\}$ & $\lnot b\land  f  \land \lnot g$ & 10 \\
        $15$ & $\{2,3,5\}$ & $\lnot c$ & 4 \\
        $16$ & $\{3,4,5\}$ & $\lnot e \land \lnot g\land h $ & 10 \\
        $17$ & $\{1,3,4,5\}$ & $\lnot g$ & 4 \\
        \hline
    \end{tabular}
    % \caption{All candidate clauses that cover at least one effective configuration of $\rvalpha(R)_\psigma$. The first column $J_i$ represents the configuration index covered by $c_i$, while the third column $\Delta\rho(c_i)$ indicates the reduction in the graph measure resulting from the removal of vertices in $c_i$.}
    \caption{The candidate clauses for the boundary-grouped MISs in \Cref{tbl:ph2}.}
\label{subcovers:ph2}    
\end{table}

It turns out to be not optimal.
To show this, we start with the reduced $\alpha$-tensor and the boundary-grouped MISs for the PH2 subgraph as illustrated in \Cref{tbl:ph2}.
Then we generate all candidate clauses $\mathcal{C} = \{c_1, c_2, \ldots, c_{17}\}$ as shown in~\Cref{subcovers:ph2}.
The $J_i$ column indicates the configuration indices covered by $c_i$ and the $\Delta\rho(c_i)$ column indicates the reduction in the graph measure in the branch induced by $c_i$.
Finally, by solving the WMSC problem, we obtain the optimal branching rule $\mathcal{D}=c_2 \lor c_8 \lor c_{10}$ with branching vector $=\{16,16,16\}$.
The corresponding $\gamma=1.0711$, which is smaller than $1.0718$ of the heuristic rule as shown in \Cref{compare}.

\begin{table}[ht!]
    \centering
    \begin{tabular}{|c|c|c|c|}
    \hline
        $ $ & $\mathcal{D}$ & branching vector & $\gamma$\\
        \hline
        Manual rule in Ref.~\cite{issac20131} & $c_9 \lor c_{14}$ & $\{10,10\}$ & 1.0718 \\
        \hline
        Optimal rule & $c_2 \lor c_8 \lor c_{10}$ & $\{16,16,16\}$ & 1.0711\\
        \hline
    \end{tabular}
    \caption{Branching rules obtained by different methods.}
\label{compare}    
\end{table}

% \subsubsubsection{\yijia{Finding Better $\gamma$ by Further Expanding the Subgraphs in Case Set}}
\subsection{Broadening the finite rule set}
Another example is from Ref.~\cite{xiao2013}, which gives the lowest complexity of $O(1.0836^n)$ for 3-regular graphs.
The rule with the highest complexity, or the \emph{bottleneck case} of the algorithm is illustrated in \Cref{fig:bottleneck}.
In the original paper, it is a multistep rule. In the first step, one takes~$\mathcal{D} = (a) \lor (\lnot a)$. Its corresponding branching vector is~$\{4, 10\}$, and the branching complexity is of~$1.1120$.
% This seemingly bad performance is not an issue, since 
Then in the next step, a \emph{fine structure} will appear, and the branching rule with lower complexity can be applied.
Such multistep branching gives an overall $\gamma = 1.0836$.

\begin{figure}[ht!]
    \centering
    \begin{tikzpicture}[decoration=brace,scale=0.7]
    \fill[green!70!black!50,opacity=0.4] (0,0) ellipse (7.2 and 7.4);
    \fill[white] (0,0) ellipse (6.1 and 6.2);
    \draw[thick] (0,0) ellipse (7.2 and 7.4); 
        \fill[yellow!90!black!80,opacity=0.5] (0,0) ellipse (6.1 and 6.2);
        % \draw[thick] (0,0) ellipse (6.1 and 6.2); 
    
        \fill[white!30] (0,0) ellipse (4 and 4);
        
\newcommand{\treelike}{
  \begin{scope}
        \node[circle, fill=white, draw=black, thick, minimum size=7, inner sep=0] at (-0.0, -0.4)(k2) {};
        \node[circle, fill=white, draw=black, thick, minimum size=7, inner sep=0] at (-0.52, -0.7) (k3) {};
        \node[circle, fill=white, draw=black, thick, minimum size=7, inner sep=0] at (0.52, -0.7) (k4) {};
        \node[circle, fill=white, draw=black, thick, minimum size=7, inner sep=0] at (-0.22, -1.124) (k5) {};
        \node[circle, fill=white, draw=black, thick, minimum size=7, inner sep=0] at (-0.82, -1.124) (k6) {};
        \node[circle, fill=white, draw=black, thick, minimum size=7, inner sep=0] at (0.22, -1.124) (k7) {};
        \node[circle, fill=white, draw=black, thick, minimum size=7, inner sep=0] at (0.82, -1.124) (k8) {};
        % \node[] at (-0.6, -2.1) (k9) {};
        % \node[] at (-1.4, -2.1) (k10) {};
        % \node[] at (0.6, -2.1) (k11) {};
        % \node[] at (1.4, -2.1) (k12) {};
         \node[] at (-0.22, -2.1) (k9) {};
        \node[] at (-0.82, -2.1) (k10) {};
        \node[] at (0.22, -2.1) (k11) {};
        \node[] at (0.82, -2.1) (k12) {};
            \draw[thick] (k2) -- (k3);
    \draw[thick] (k2) -- (k4);
    \draw[thick] (k3) -- (k5);
    \draw[thick] (k3) -- (k6);
    \draw[thick] (k4) -- (k7);
    \draw[thick] (k4) -- (k8);
    \draw[dashed] (k5) -- (k9);
    \draw[dashed] (k6) -- (k10);
    \draw[dashed] (k7) -- (k11);
    \draw[dashed] (k8) -- (k12);
  \end{scope}
}

\newcommand{\branchc}{
  \begin{scope}
  \node[circle, fill=black, draw=black, thick, minimum size=10, inner sep=0] at (0.0, 0.0) (c) {\color{white}{c}};
  \node[circle, fill=black, draw=black, thick, minimum size=10, inner sep=0] at (-1.0, 1.0) (g) {\color{white}{g}};
  \node[circle, fill=black, draw=black, thick, minimum size=10, inner sep=0] at (1.0, 1.0) (h) {\color{white}{h}};
  \node[circle, fill=black, draw=black, thick, minimum size=10, inner sep=0] at (-1.577, 2.0) (o) {\color{white}{o}};
  \node[circle, fill=black, draw=black, thick, minimum size=10, inner sep=0] at (-0.423, 2.0) (r) {\color{white}{r}};
  \node[circle, fill=black, draw=black, thick, minimum size=10, inner sep=0] at (0.423, 2.0) (p) {\color{white}{p}};
  \node[circle, fill=black, draw=black, thick, minimum size=10, inner sep=0] at (1.577, 2.0) (q) {\color{white}{q}};
  \draw[thick] (c) -- (g);
        \draw[thick] (c) -- (h);
        \draw[thick] (g) -- (o);
        \draw[thick] (g) -- (p);
        \draw[thick] (h) -- (r);
         \draw[thick] (h) -- (q);
          \draw[thick] (o) -- (r);
           \draw[thick] (p) -- (q);
     \begin{scope}[xshift=-66,yshift=80,rotate=215]
    \treelike
    \draw[thick] (o) -- (k2);
  \end{scope}
   \begin{scope}[xshift=-25,yshift=100,rotate=-170]
    \treelike
    \draw[thick] (r) -- (k2);
  \end{scope}
  \begin{scope}[xshift=25,yshift=100,rotate=170]
    \treelike
    \draw[thick] (p) -- (k2);
  \end{scope}
  \begin{scope}[xshift=66,yshift=80,rotate=-215]
    \treelike
    \draw[thick] (q) -- (k2);
  \end{scope}
  \end{scope}
  }

  \newcommand{\branchb}{
  \begin{scope}
  \node[circle, fill=black, draw=black, thick, minimum size=10, inner sep=0] at (0.0, 0.0) (c) {\color{white}{b}};
  \node[circle, fill=black, draw=black, thick, minimum size=10, inner sep=0] at (-1.0, 1.0) (g) {\color{white}{e}};
  \node[circle, fill=black, draw=black, thick, minimum size=10, inner sep=0] at (1.0, 1.0) (h) {\color{white}{f}};
  \node[circle, fill=black, draw=black, thick, minimum size=10, inner sep=0] at (-1.577, 2.0) (o) {\color{white}{k}};
  \node[circle, fill=black, draw=black, thick, minimum size=10, inner sep=0] at (-0.423, 2.0) (r) {\color{white}{n}};
  \node[circle, fill=black, draw=black, thick, minimum size=10, inner sep=0] at (0.423, 2.0) (p) {\color{white}{l}};
  \node[circle, fill=black, draw=black, thick, minimum size=10, inner sep=0] at (1.577, 2.0) (q) {\color{white}{m}};
  \draw[thick] (c) -- (g);
        \draw[thick] (c) -- (h);
        \draw[thick] (g) -- (o);
        \draw[thick] (g) -- (p);
        \draw[thick] (h) -- (r);
         \draw[thick] (h) -- (q);
          \draw[thick] (o) -- (r);
           \draw[thick] (p) -- (q);
     \begin{scope}[xshift=-66,yshift=80,rotate=215]
    \treelike
    \draw[thick] (o) -- (k2);
  \end{scope}
   \begin{scope}[xshift=-25,yshift=100,rotate=-170]
    \treelike
    \draw[thick] (r) -- (k2);
  \end{scope}
  \begin{scope}[xshift=25,yshift=100,rotate=170]
    \treelike
    \draw[thick] (p) -- (k2);
  \end{scope}
  \begin{scope}[xshift=66,yshift=80,rotate=-215]
    \treelike
    \draw[thick] (q) -- (k2);
  \end{scope}
  \end{scope}
  }

  \newcommand{\branchd}{
  \begin{scope}
  \node[circle, fill=black, draw=black, thick, minimum size=10, inner sep=0] at (0.0, 0.0) (c) {\color{white}{d}};
  \node[circle, fill=black, draw=black, thick, minimum size=10, inner sep=0] at (-1.0, 1.0) (g) {\color{white}{i}};
  \node[circle, fill=black, draw=black, thick, minimum size=10, inner sep=0] at (1.0, 1.0) (h) {\color{white}{j}};
  \node[circle, fill=black, draw=black, thick, minimum size=10, inner sep=0] at (-1.577, 2.0) (o) {\color{white}{s}};
  \node[circle, fill=black, draw=black, thick, minimum size=10, inner sep=0] at (-0.423, 2.0) (r) {\color{white}{v}};
  \node[circle, fill=black, draw=black, thick, minimum size=10, inner sep=0] at (0.423, 2.0) (p) {\color{white}{t}};
  \node[circle, fill=black, draw=black, thick, minimum size=10, inner sep=0] at (1.577, 2.0) (q) {\color{white}{u}};
  \draw[thick] (c) -- (g);
        \draw[thick] (c) -- (h);
        \draw[thick] (g) -- (o);
        \draw[thick] (g) -- (p);
        \draw[thick] (h) -- (r);
         \draw[thick] (h) -- (q);
          \draw[thick] (o) -- (r);
           \draw[thick] (p) -- (q);
     \begin{scope}[xshift=-66,yshift=80,rotate=215]
    \treelike
    \draw[thick] (o) -- (k2);
  \end{scope}
   \begin{scope}[xshift=-25,yshift=100,rotate=-170]
    \treelike
    \draw[thick] (r) -- (k2);
  \end{scope}
  \begin{scope}[xshift=25,yshift=100,rotate=170]
    \treelike
    \draw[thick] (p) -- (k2);
  \end{scope}
  \begin{scope}[xshift=66,yshift=80,rotate=-215]
    \treelike
    \draw[thick] (q) -- (k2);
  \end{scope}
  \end{scope}
  }

\node[circle, fill=black, draw=black, thick, minimum size=10, inner sep=0] at (0.0, 0.0) (a) {\color{white}{a}};

\begin{scope}[yshift=30]
    \branchc'=
    \draw[thick] (a) -- (c);
\end{scope}

\begin{scope}[xshift=-25.98,yshift=-15,rotate=120]
    \branchb
    \draw[thick] (a) -- (c);
\end{scope}

  \begin{scope}[xshift=25.98,yshift=-15,rotate=-120]
    \branchd
    \draw[thick] (a) -- (c);
\end{scope}

         \textcolor{black}{\node[font=\Large] at (-2.2, 1.4) {$R$};}
        \textcolor{black}{\node[font=\Large] at (-4.2, 3) {$N_3[R]$};}
        \textcolor{black}{\node[font=\Large] at (-5.55,3.6) {$G$};}
    \end{tikzpicture}
    \caption{
        The example of the bottleneck case in Ref.~\cite{xiao2013}. The neighborhood of $R$ is chosen to be tree-like.
        }
    \label{fig:bottleneck}
\end{figure}
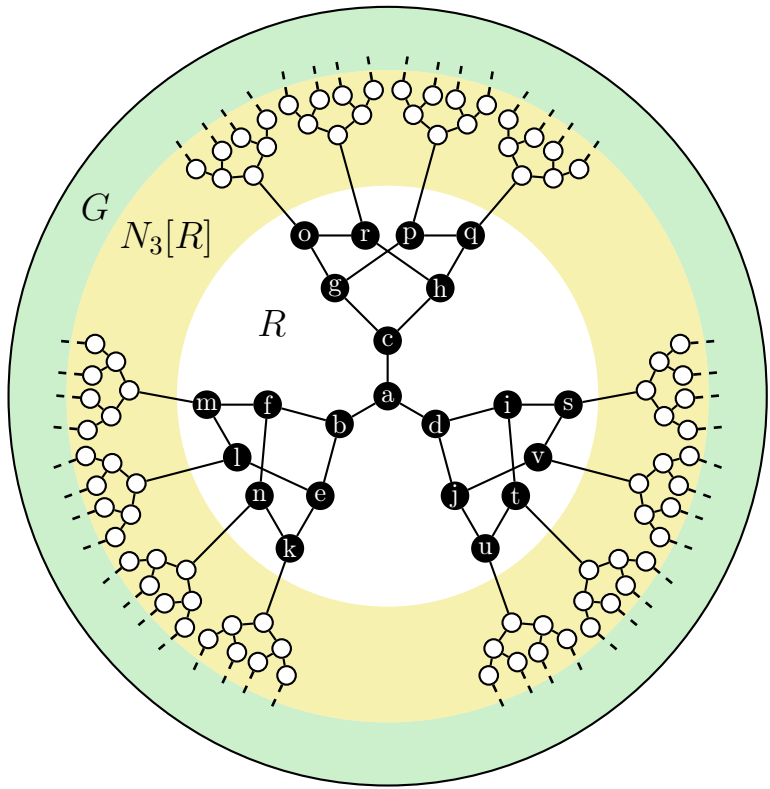

Here, we also consider a special case where $N_3[R]$ is tree-like. 
When applying our method, this structure generates $71$ entries in the reduced-$\alpha$-tensor and $15782$ candidate clauses.
This seemingly large integer programming problem can be solved in a few seconds due to the easy problem structure.
The resulting optimal branching rule on $R$ yields the branching vector $\{10,16,26,26\}$ with a complexity of $\gamma=1.0817$, which is lower than that of the multistep branching.
Although this tree-like environment is considered a hard instance for multistep branching, we admit that the optimal branching does not consider all possible choices of the environment. A rigorous proof of having a lower complexity is left as a future work.

\section{On-the-fly branch-and-reduce}\label{sec:numerical-results}

This section presents a numeric method that utilizes the optimal branching algorithm for solving the MIS problem.
Its performance is benchmarked against existing methods across various graphs, including 3-regular graphs, Erdos-Renyi graphs and geometric graphs.
Additionally, the performance of its LP relaxation is also examined.
Our methods are implemented based on the Julia Programming Language~\cite{julia-2017} and are open-sourced on GitHub~\cite{optimalbranching}. 
A comprehensive technical guide is available in \Cref{sec:technical}.

\subsection{The algorithm}

We improve the branch-and-reduce algorithm by including our optimal branching algorithm to generate the branching rules on-the-fly.
A branch-and-reduce algorithm contains two building blocks: reduction and branching.
Reduction is a graph rewriting process that replaces specific sub-graphs with smaller ones.
The simplest reduction is the d1/d2 reduction that only handles degree 1 and 2 vertices.
Although some reduction rules such as the d1 reduction and the dominance rule that only involve vertex removal can be automatically discovered by our optimal branching algorithm, reduction rules that requires more sophisticated rewriting fail to fit into the branching framework.
Therefore, we still need to borrow the existing reduction rules from the previous methods.
The reduction rules and branching algorithms used in this study are listed in \Cref{tbl:algorithms}.
Two different sets of additional reduction rules are used. They are the Xiao's rules that are used in Ref.~\cite{xiao2013} and the packing rule from Ref.~\cite{akiba2016branch}.
Since reduction does not increase the number of branches, it is a free-to-use resource.
Branching is performed exclusively when no further reduction is feasible.
The choice of vertex set for branching (the function \texttt{select\_subgraph}) is more flexible than the previous methods.
% It is not limited to a vertex and its second-order neighborhood, since the rule generator can generate the rule tailored for any vertex set.
A heuristic vertex selection strategy is used, where we check each vertex $v$ and its neighborhood $N_2[v]$, and branch over the region with the fewest boundary vertices.

\begin{table}[ht!]
    \centering
    \renewcommand{\arraystretch}{2}
    \begin{tabular}{|c|c|c|c|}
    \hline
    \diagbox{\makecell[c]{Reduction\\Rules}}{\makecell[c]{Branching\\Algorithms}}&\makecell[c]{Optimal\\Branching\\(this work)} & \makecell[c]{Xiao 2013~\cite{xiao2013}}& \makecell[c]{Akiba 2015~\cite{akiba2016branch}} \\
    \hline
     \makecell[c]{d1/d2 reduction} & \texttt{ob} & \rule{1cm}{0.4pt} & \texttt{akiba2015}\\
    \hline
    \makecell[c]{d1/d2 reduction\\Xiao's rules~\cite{xiao2013}} & \texttt{ob+xiao} & \texttt{xiao2013}  & \rule{1cm}{0.4pt}\\
    \hline
    \makecell[c]{d1/d2 reduction\\Xiao's rules\\packing rule~\cite{akiba2016branch}} & \rule{1cm}{0.4pt} & \rule{1cm}{0.4pt}  & \makecell[c]{\texttt{akiba2015+} \\ \texttt{xiao\&packing}}\\
    \hline
    \end{tabular}
    \caption{
        The various branch-and-reduce algorithms used in this study.
        Different rows correspond to different reduction rules and different columns correspond to different branching algorithms.
        \texttt{ob+xiao} excludes the packing rule since it can be automatically discovered by our optimal branching algorithm.
        }
    \label{tbl:algorithms}
\end{table}

\subsection{Comparison with the existing methods}
As shown in \Cref{tbl:algorithms}, two state-of-the-art methods are compared with our method.
The \texttt{xiao2013}~\cite{xiao2013} is tailored for 3-regular graphs, which possesses the lowest theoretical computational complexity of $O(1.0836^n)$.
As we will show later, its practical performance is much better than the theoretical complexity.
The \texttt{akiba2015} and \texttt{akiba2015+xiao\&packing}~\cite{akiba2016branch} are from the winning solver of the minimum vertex cover (the same as MIS) problem in the exact track at the PACE 2019 challenge~\cite{hespe2020wegotyoucovered}.
%  (so the branching phase within it is designated as \texttt{vc}).
%\rev{For comparision, we combine its branching phase algorithm with different reduction rules with and without kernelization, resulting in \texttt{vc} and \texttt{vc+kernel}, respectively.} \yijia{Additionally, since the packing rules in~\cite{akiba2016branch} can naturally emerge from our on-the-fly branching, we omit this rule only when kernelizing the graph our method.} \yijia{[YJ: I think this sentence is necessary; otherwise, readers might wonder why we use two different kernelization conditions. Additionally, I suddenly feel that maybe we should discuss the two benchmark algorithms in the branching phase section before the reduction phase? After all, we are benchmarking their branching phase, and it will also be naturally to explain why we chose these reduction settings.]}
%Details about the algorithms are summarized in \Cref{tbl:algorithms}. 
%The on-the-fly algorithms developed in this study are compared with existing methods.
We use the total number of branches as our metric, which reflects how much the branching algorithm can prune the search space.
The performance is tested over 4 types of graphs: 3-regular graphs, Erdos-Renyi graphs, King's subgraphs~\cite{Ebadi2022}, and grid graphs. For the Erdos-Renyi graphs, we set the average degree to $d=3$, while for the King's sub-graphs and grid graphs, we select a filling rate of $f=0.8$. 
The results of this comparison are presented in \Cref{fig:branching_comparison}, where each data point represents the geometric average of results from 1,000 runs conducted on randomly generated graphs.

We fitted the average branching factor $\gamma$ with the above data and present the results in \Cref{tbl:branching_comparison}.
For 3-regular graphs, we find the algorithms \texttt{ob} and \texttt{ob+xiao} reduce the average complexity to $O(1.0457^n)$ and $O(1.0441^n)$, respectively, outperforming the \texttt{xiao2013}'s $O(1.0487^n)$.
Furthermore, \texttt{ob+xiao} consistently produces fewer branches than \texttt{xiao2013} across all tested problem sizes. 
In other graphs, \texttt{ob} demonstrates superior performance compared to \texttt{akiba} when using only d1/d2 reduction in both cases.
Additionally, despite using fewer reduction rules, \texttt{ob+xiao} achieves a performance comparable to that of \texttt{akiba2015+xiao\&packing}.
These results affirm the effectiveness and generality of the optimal branching rule generation algorithm across a variety of graphs, regardless of whether they are bounded or unbounded, geometric or non-geometric.
While the above study is based on the average case, the worst-case performances are presented in \Cref{sec:worst_case}, revealing conclusions that align closely with those observed in the average case.

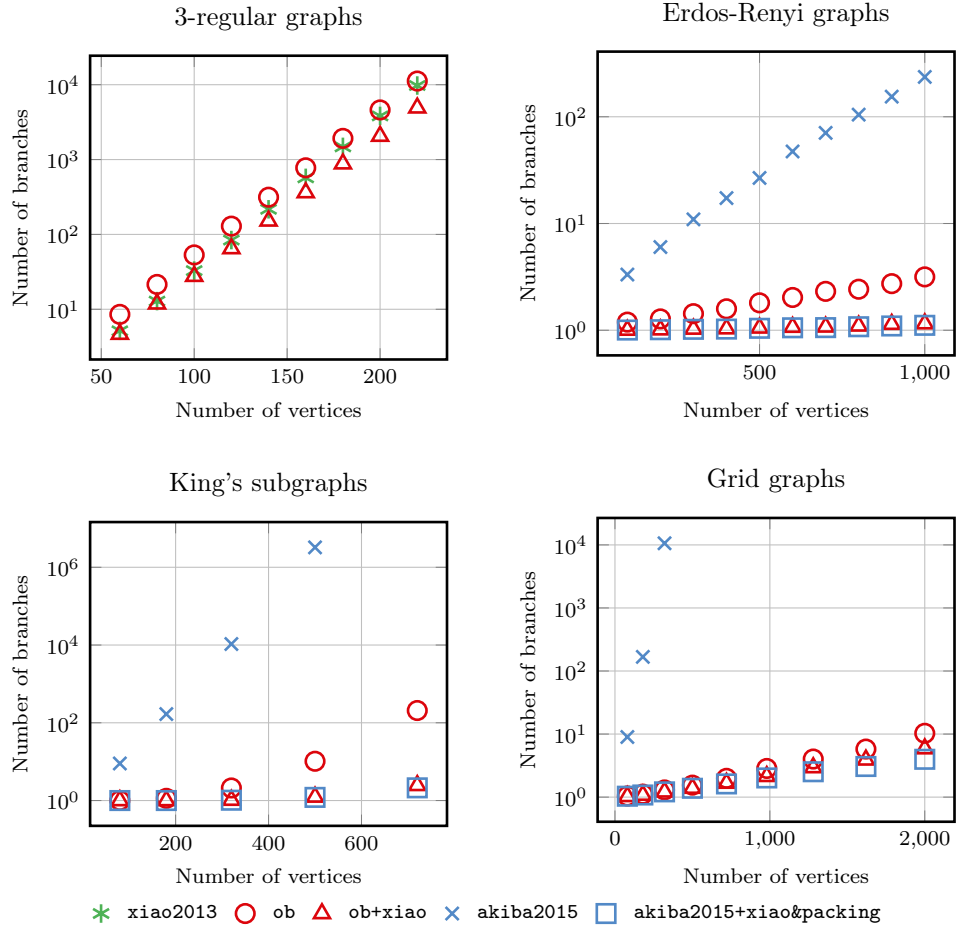
\begin{figure*}[th]
    %\captionsetup[subfigure]{font=small} if you like to change caption style
    \begin{subfigure}[b]{0.48\textwidth}
      \centering
    % Recommended preamble:
\begin{tikzpicture}
\begin{axis}[height={160pt}, width={180pt}, title={3-regular graphs}, xlabel={Number of vertices}, xmajorgrids={true}, ymode={log}, mark size={3.5pt}, ymajorgrids={true}, yminorticks={false}, ylabel={Number of branches}, label style={font={\footnotesize}}, tick label style={font={\footnotesize}}, scatter/classes={xiao2013={mark={asterisk}}, obfold={mark={o}}, obxiao={mark={triangle}}}, legend style={legend columns={4}, at={(0.51,-0.26)
}, anchor={south}, draw={none}, font={\footnotesize}, column sep={1.5}}]
    \addplot[c03, scatter, only marks, scatter src={explicit symbolic}, legend image post style={black}, legend style={text={black}, font={\footnotesize}}]
        table[row sep={\\}, meta={label}]
        {
            x  y  label  \\
            60  5.155602358173561  xiao2013
              \\
            80  12.853840950699306  xiao2013
              \\
            100  32.856746608044425  xiao2013
              \\
            120  84.06976456288305  xiao2013
              \\
            140  216.61709644876703  xiao2013
              \\
            160  569.8487885376636  xiao2013
              \\
            180  1477.348008964361  xiao2013
              \\
            200  3827.940711984912  xiao2013
              \\
            220  9754.763596282904  xiao2013
              \\
        }
        ;
    \addplot[c01, scatter, only marks, scatter src={explicit symbolic}]
        table[row sep={\\}, meta={label}]
        {
            x  y  label  \\
            60  8.547331411171319  obfold
              \\
            80  21.462494969378344  obfold
              \\
            100  53.22108436313227  obfold
              \\
            120  128.98817314035972  obfold
              \\
            140  314.2623742655755  obfold
              \\
            160  776.257895894933  obfold
              \\
            180  1919.5877577580188  obfold
              \\
            200  4603.315779042051  obfold
              \\
            220  11198.308958511976  obfold
              \\
        }
        ;
    \addplot[c01, scatter, only marks, scatter src={explicit symbolic}]
        table[row sep={\\}, meta={label}]
        {
            x  y  label  \\
            60  4.6478746452399875  obxiao
              \\
            80  11.814279842997914  obxiao
              \\
            100  27.620878387908785  obxiao
              \\
            120  64.7205689335533  obxiao
              \\
            140  151.48837880883337  obxiao
              \\
            160  362.12520363313956  obxiao
              \\
            180  879.1384360748485  obxiao
              \\
            200  2055.1066794923768  obxiao
              \\
            220  4910.64458551355  obxiao
              \\
        }
        ;
\end{axis}
\end{tikzpicture}
      \label{fig:3rr_mean}
    \end{subfigure}
    \hfill
    \begin{subfigure}[b]{0.48\textwidth}
      \centering
    % Recommended preamble:
\begin{tikzpicture}
\begin{axis}[height={160pt}, width={180pt}, title={Erdos-Renyi graphs}, xlabel={Number of vertices}, xmajorgrids={true}, mark size={3.5pt}, ymode={log}, ymajorgrids={true}, yminorticks={false}, ylabel={Number of branches}, label style={font={\footnotesize}}, tick label style={font={\footnotesize}}, scatter/classes={ipfold={mark={o}}, ipxiao={mark={triangle}}, vcfold={mark={x}}, vcpack={mark={square}}}, legend style={legend columns={4}, at={(0.51,-0.26)
}, anchor={south}, draw={none}, font={\footnotesize}, column sep={1.5}}]
    \addplot[c01, scatter, only marks, scatter src={explicit symbolic}]
        table[row sep={\\}, meta={label}]
        {
            x  y  label  \\
            100  1.1914848973116265  ipfold
              \\
            200  1.2765465569253474  ipfold
              \\
            300  1.429798706159736  ipfold
              \\
            400  1.588216808829317  ipfold
              \\
            500  1.8059240774740366  ipfold
              \\
            600  2.0272345953317275  ipfold
              \\
            700  2.313183014978612  ipfold
              \\
            800  2.4225675481124784  ipfold
              \\
            900  2.732299866744741  ipfold
              \\
            1000  3.160620788857027  ipfold
              \\
        }
        ;
    \addplot[c01, scatter, only marks, scatter src={explicit symbolic}]
        table[row sep={\\}, meta={label}]
        {
            x  y  label  \\
            100  1.0034717485095028  ipxiao
              \\
            200  1.0108085516907024  ipxiao
              \\
            300  1.0266189351150794  ipxiao
              \\
            400  1.0336039239297363  ipxiao
              \\
            500  1.051406614310668  ipxiao
              \\
            600  1.0686949278656799  ipxiao
              \\
            700  1.0742642636856516  ipxiao
              \\
            800  1.0976311481572165  ipxiao
              \\
            900  1.1388173529920471  ipxiao
              \\
            1000  1.1520897039266373  ipxiao
              \\
        }
        ;
    \addplot[c02, scatter, only marks, scatter src={explicit symbolic}, legend image post style={black}, legend style={text={black}, font={\footnotesize}}]
        table[row sep={\\}, meta={label}]
        {
            x  y  label  \\
            100  3.324601352738916  vcfold
              \\
            200  6.016730202490998  vcfold
              \\
            300  10.937512757622054  vcfold
              \\
            400  17.312001168353763  vcfold
              \\
            500  26.722037094504643  vcfold
              \\
            600  47.306984336487474  vcfold
              \\
            700  70.66032937960429  vcfold
              \\
            800  104.74493319628976  vcfold
              \\
            900  154.42374334491666  vcfold
              \\
            1000  236.30701465529503  vcfold
              \\
        }
        ;
    \addplot[c02, scatter, only marks, scatter src={explicit symbolic}]
        table[row sep={\\}, meta={label}]
        {
            x  y  label  \\
            100  1.0038787037880033  vcpack
              \\
            200  1.0113124673763194  vcpack
              \\
            300  1.0199419580511582  vcpack
              \\
            400  1.024823898028535  vcpack
              \\
            500  1.042489608887938  vcpack
              \\
            600  1.0539663909581074  vcpack
              \\
            700  1.05781834271155  vcpack
              \\
            800  1.0785732163103672  vcpack
              \\
            900  1.096265575504883  vcpack
              \\
            1000  1.11175086472622  vcpack
              \\
        }
        ;
\end{axis}
\end{tikzpicture}
      \label{fig:er_mean}
    \end{subfigure}
    \vskip\baselineskip
    \begin{subfigure}[b]{0.48\textwidth}
      \centering
    % Recommended preamble:
\begin{tikzpicture}
\begin{axis}[height={160pt}, width={180pt}, title={King's subgraphs}, xlabel={Number of vertices}, xmajorgrids={true}, mark size={3.5pt}, ymode={log}, ymajorgrids={true}, yminorticks={false}, ylabel={Number of branches}, label style={font={\footnotesize}}, tick label style={font={\footnotesize}}, scatter/classes={ipfold={mark={o}}, ipxiao={mark={triangle}}, vcfold={mark={x}}, vcpack={mark={square}}}, legend style={legend columns={4}, at={(0.51,-0.26)
}, anchor={south}, draw={none}, font={\footnotesize}, column sep={1.5}}]
    \addplot[c01, scatter, only marks, scatter src={explicit symbolic}]
        table[row sep={\\}, meta={label}]
        {
            x  y  label  \\
            80  1.0038787037880033  ipfold
              \\
            180  1.1391114722921258  ipfold
              \\
            320  2.104611846648551  ipfold
              \\
            500  10.24809660393024  ipfold
              \\
            720  206.73703276213993  ipfold
              \\
        }
        ;
    \addplot[c01, scatter, only marks, scatter src={explicit symbolic}]
        table[row sep={\\}, meta={label}]
        {
            x  y  label  \\
            80  1.0  ipxiao
              \\
            180  1.0  ipxiao
              \\
            320  1.024503428619609  ipxiao
              \\
            500  1.2330241635884165  ipxiao
              \\
            720  2.4502438651732428  ipxiao
              \\
        }
        ;
    \addplot[c02, scatter, only marks, scatter src={explicit symbolic}, legend image post style={black}, legend style={text={black}, font={\footnotesize}}]
        table[row sep={\\}, meta={label}]
        {
            x  y  label  \\
            80  8.98265754177741  vcfold
              \\
            180  167.55918454651427  vcfold
              \\
            320  10614.399881794405  vcfold
              \\
            500  3.2492125404101643e6  vcfold
              \\
        }
        ;
    \addplot[c02, scatter, only marks, scatter src={explicit symbolic}]
        table[row sep={\\}, meta={label}]
        {
            x  y  label  \\
            80  1.0  vcpack
              \\
            180  1.0  vcpack
              \\
            320  1.0198218332656155  vcpack
              \\
            500  1.2047680919704655  vcpack
              \\
            720  2.1202532982512414  vcpack
              \\
        }
        ;
\end{axis}
\end{tikzpicture}
      \label{fig:ksg_mean}
    \end{subfigure}
    \hfill
    \begin{subfigure}[b]{0.48\textwidth}
      \centering
    % Recommended preamble:
\begin{tikzpicture}
\begin{axis}[height={160pt}, width={180pt}, title={Grid graphs}, xlabel={Number of vertices}, xmajorgrids={true}, mark size={3.5pt}, ymode={log}, ymajorgrids={true}, yminorticks={false}, ylabel={Number of branches}, label style={font={\footnotesize}}, tick label style={font={\footnotesize}}, scatter/classes={ipfold={mark={o}}, ipxiao={mark={triangle}}, vcfold={mark={x}}, vcpack={mark={square}}}, legend style={legend columns={4}, at={(0.51,-0.26)
}, anchor={south}, draw={none}, font={\footnotesize}, column sep={1.5}}]
    \addplot[c01, scatter, only marks, scatter src={explicit symbolic}]
        table[row sep={\\}, meta={label}]
        {
            x  y  label  \\
            80  1.0418661359379204  ipfold
              \\
            180  1.11492447912701  ipfold
              \\
            320  1.2992216822427385  ipfold
              \\
            500  1.548349694599786  ipfold
              \\
            720  1.985646481463284  ipfold
              \\
            980  2.8474783787138014  ipfold
              \\
            1280  4.017515216696855  ipfold
              \\
            1620  5.7793401662955395  ipfold
              \\
            2000  10.281781980148734  ipfold
              \\
        }
        ;
    \addplot[c01, scatter, only marks, scatter src={explicit symbolic}]
        table[row sep={\\}, meta={label}]
        {
            x  y  label  \\
            80  1.0252672378885939  ipxiao
              \\
            180  1.0765643727738674  ipxiao
              \\
            320  1.2128795642448666  ipxiao
              \\
            500  1.3883154092510803  ipxiao
              \\
            720  1.6676945971332373  ipxiao
              \\
            980  2.1348044166135725  ipxiao
              \\
            1280  2.941720898597526  ipxiao
              \\
            1620  3.8931587726155197  ipxiao
              \\
            2000  5.864607995647113  ipxiao
              \\
        }
        ;
    \addplot[c02, scatter, only marks, scatter src={explicit symbolic}, legend image post style={black}, legend style={text={black}, font={\footnotesize}}]
        table[row sep={\\}, meta={label}]
        {
            x  y  label  \\
            80  8.98265754177741  vcfold
              \\
            180  167.55918454651427  vcfold
              \\
            320  10614.399881794405  vcfold
              \\
        }
        ;
    \addplot[c02, scatter, only marks, scatter src={explicit symbolic}]
        table[row sep={\\}, meta={label}]
        {
            x  y  label  \\
            80  1.0328172219426814  vcpack
              \\
            180  1.0805564459888153  vcpack
              \\
            320  1.2117015913362839  vcpack
              \\
            500  1.3810060423633534  vcpack
              \\
            720  1.6187728608334593  vcpack
              \\
            980  2.0131747785523286  vcpack
              \\
            1280  2.5260324063303563  vcpack
              \\
            1620  3.058645780512276  vcpack
              \\
            2000  3.9794915019899957  vcpack
              \\
        }
        ;
\end{axis}
\end{tikzpicture}
      \label{fig:sq_mean}
    \end{subfigure}
  
    \begin{subfigure}{\textwidth}
      \centering
      \begin{tikzpicture}[every mark/.append style={mark size=3.5pt}]
    \begin{axis}[
      hide axis,
      height=46px,
      xmin=0,
      xmax=1,
      ymin=0,
      ymax=1,
      legend style={
        legend columns={6},
        at={(0.51,1.0)},
        anchor={south},
        draw={none},
        font={\footnotesize},
        column sep={5.0}
      }
      ]
      % Add legend entries
      \addlegendimage{mark={asterisk}, color=c03, only marks}
      \addlegendentry{\texttt{xiao2013}}
      \addlegendimage{mark={o}, color=c01, only marks}
      \addlegendentry{\texttt{ob}}
      \addlegendimage{mark={triangle}, color=c01, only marks}
      \addlegendentry{\texttt{ob+xiao}}
      \addlegendimage{mark={x}, color=c02, only marks}
      \addlegendentry{\texttt{akiba2015}}
      \addlegendimage{mark={square}, color=c02, only marks}
      \addlegendentry{\texttt{akiba2015+xiao\&packing}}
      % \addlegendimage{mark={x}, color=black, only marks}
      % \addlegendentry{\texttt{vc\_3}}
      % \addlegendimage{mark={Mercedes star}, color=black, only marks}
      % \addlegendentry{Segmentation}
      % \addlegendimage{mark={diamond}, color=black, only marks}
      % \addlegendentry{Linkage}
      % \addlegendimage{mark={oplus}, color=black, only marks}
      % \addlegendentry{Promedas}
      % \addlegendimage{mark={otimes}, color=black, only marks}
      % \addlegendentry{ProteinFolding}
  
    \end{axis}
  \end{tikzpicture}
  
      \label{fig:legend_compare}
    \end{subfigure}
  
    \caption{
        The average number of branches produced by various algorithms on different graphs as functions of the graph size.
        % \jinguo{labels larger. Color mismatch.} \rev{now labels are larger, color changed}
    }%
  
    \label{fig:branching_comparison}
  \end{figure*}

  \begin{table}[ht!]
    \centering
    \begin{tabular}{c c c c c c}
    \hline
         & \texttt{ob} & \makecell[c]{\texttt{ob+}\\ \texttt{xiao}} & \texttt{xiao2013} & \texttt{akiba2015} & \makecell[c]{\texttt{akiba2015+} \\ \texttt{xiao\&packing}} \\
        \hline
        {3-regular graphs} & 1.0457 & 1.0441 & 1.0487 & - & - \\
        {Erdos-Renyi graphs} & 1.0011 & 1.0002 & - & 1.0044 & 1.0001 \\
        {King's sub-graphs} & 1.0116 & 1.0022 & - & 1.0313 & 1.0019 \\
        {Grid graphs} & 1.0012 & 1.0009 & - & 1.0294 & 1.0007 \\
        \hline
    \end{tabular}
    \caption{
        The average branching factor $\gamma$ for the branch-and-reduce algorithms in \Cref{tbl:algorithms} on different graphs.
        These results are obtained by fitting the data \Cref{fig:branching_comparison}.
        }
    \label{tbl:branching_comparison}
\end{table}

The high performance of on-the-fly branching also comes from the flexible vertex selection strategy.
Our selection strategy simply selects the second order neighborhood with the fewest boundary vertices, which is a larger region with a small boundary.
This implies more internal constraints, making it more likely to yield effective branching rules. 
Moreover, there is a room to further improve the vertex selection strategy, e.g. we can select a long chain as the branching region instead of limiting to the second order neighborhood.
In the past, we do not have this freedom since we need to detect the mirrors~\cite{Fomin2013}, satellites~\cite{kneis2009fine}, and other structures that only live in the second order neighborhood.

\subsection{LP relaxation}

While the integer programming solver is already efficient enough for us to generate the rules on-the-fly,
by relaxing the integer programming to linear programming, we can further speed up the generation of the branching rules.
While linear programming is convex and can be solved in polynomial time, it does not guarantee the optimal solution for the WMSC problem, potentially resulting in a higher branching factor.
The relaxation of \texttt{ob} and \texttt{ob+xiao} are denoted as \texttt{ob\_relax} and \texttt{ob\_relax+xiao}, respectively, and the results are presented in \Cref{fig:branching_comparison_iplp}.
It is confirmed that the linear programming relaxation results in an increased number of branches, however, the difference is small across all tested problems. 

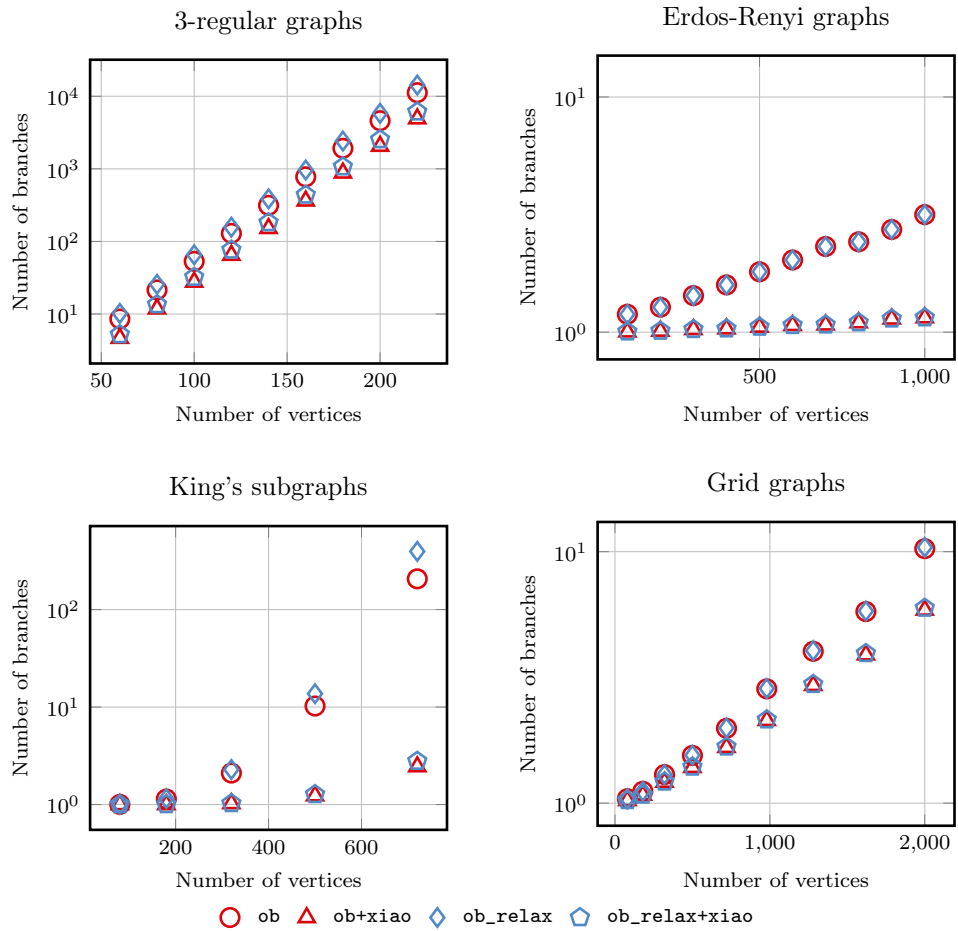
\begin{figure*}[th]
    %\captionsetup[subfigure]{font=small} if you like to change caption style
    \begin{subfigure}[b]{0.48\textwidth}
        \centering
      % Recommended preamble:
\begin{tikzpicture}
\begin{axis}[height={160pt}, width={180pt}, title={3-regular graphs}, xlabel={Number of vertices}, xmajorgrids={true}, ymode={log}, mark size={3.5pt}, ymajorgrids={true}, yminorticks={false}, ylabel={Number of branches}, label style={font={\footnotesize}}, tick label style={font={\footnotesize}}, scatter/classes={obfold={mark={o}}, obxiao={mark={triangle}}, obrelaxfold={mark={diamond}}, obrelaxxiao={mark={pentagon}}}, legend style={legend columns={4}, at={(0.51,-0.26)
}, anchor={south}, draw={none}, font={\footnotesize}, column sep={1.5}}]
    \addplot[c01, scatter, only marks, scatter src={explicit symbolic}]
        table[row sep={\\}, meta={label}]
        {
            x  y  label  \\
            60  8.547331411171319  obfold
              \\
            80  21.462494969378344  obfold
              \\
            100  53.22108436313227  obfold
              \\
            120  128.98817314035972  obfold
              \\
            140  314.2623742655755  obfold
              \\
            160  776.257895894933  obfold
              \\
            180  1919.5877577580188  obfold
              \\
            200  4603.315779042051  obfold
              \\
            220  11198.308958511976  obfold
              \\
        }
        ;
    \addplot[c01, scatter, only marks, scatter src={explicit symbolic}]
        table[row sep={\\}, meta={label}]
        {
            x  y  label  \\
            60  4.6478746452399875  obxiao
              \\
            80  11.814279842997914  obxiao
              \\
            100  27.620878387908785  obxiao
              \\
            120  64.7205689335533  obxiao
              \\
            140  151.48837880883337  obxiao
              \\
            160  362.12520363313956  obxiao
              \\
            180  879.1384360748485  obxiao
              \\
            200  2055.1066794923768  obxiao
              \\
            220  4910.64458551355  obxiao
              \\
        }
        ;
    \addplot[c02, scatter, only marks, scatter src={explicit symbolic}]
        table[row sep={\\}, meta={label}]
        {
            x  y  label  \\
            60  10.197907237228197  obrelaxfold
              \\
            80  25.644970080036096  obrelaxfold
              \\
            100  65.48657772035509  obrelaxfold
              \\
            120  157.39239439237215  obrelaxfold
              \\
            140  386.06805912914433  obrelaxfold
              \\
            160  963.7134513014173  obrelaxfold
              \\
            180  2413.6568852122386  obrelaxfold
              \\
            200  5823.620376026119  obrelaxfold
              \\
            220  14275.813967905267  obrelaxfold
              \\
        }
        ;
    \addplot[c02, scatter, only marks, scatter src={explicit symbolic}]
        table[row sep={\\}, meta={label}]
        {
            x  y  label  \\
            60  5.2146332511851154  obrelaxxiao
              \\
            80  13.483713573907071  obrelaxxiao
              \\
            100  32.14029554556289  obrelaxxiao
              \\
            120  76.38421599409541  obrelaxxiao
              \\
            140  181.9005755316565  obrelaxxiao
              \\
            160  436.1486648623544  obrelaxxiao
              \\
            180  1068.5657748398262  obrelaxxiao
              \\
            200  2530.6732212174907  obrelaxxiao
              \\
            220  6103.662608264964  obrelaxxiao
              \\
        }
        ;
\end{axis}
\end{tikzpicture}
        \label{fig:3rr_mean_ip_lp}
      \end{subfigure}
      \hfill
      \begin{subfigure}[b]{0.48\textwidth}
        \centering
      % Recommended preamble:
\begin{tikzpicture}
\begin{axis}[height={160pt}, width={180pt}, title={Erdos-Renyi graphs}, xlabel={Number of vertices}, xmajorgrids={true}, ymax={15}, mark size={3.5pt}, ymode={log}, ymajorgrids={true}, yminorticks={false}, ylabel={Number of branches}, label style={font={\footnotesize}}, tick label style={font={\footnotesize}}, scatter/classes={ipfold={mark={o}}, ipxiao={mark={triangle}}, iprelaxfold={mark={diamond}}, iprelaxxiao={mark={pentagon}}}, legend style={legend columns={4}, at={(0.51,-0.26)
}, anchor={south}, draw={none}, font={\footnotesize}, column sep={1.5}}]
    \addplot[c01, scatter, only marks, scatter src={explicit symbolic}]
        table[row sep={\\}, meta={label}]
        {
            x  y  label  \\
            100  1.1914848973116265  ipfold
              \\
            200  1.2765465569253474  ipfold
              \\
            300  1.429798706159736  ipfold
              \\
            400  1.588216808829317  ipfold
              \\
            500  1.8059240774740366  ipfold
              \\
            600  2.0272345953317275  ipfold
              \\
            700  2.313183014978612  ipfold
              \\
            800  2.4225675481124784  ipfold
              \\
            900  2.732299866744741  ipfold
              \\
            1000  3.160620788857027  ipfold
              \\
        }
        ;
    \addplot[c01, scatter, only marks, scatter src={explicit symbolic}]
        table[row sep={\\}, meta={label}]
        {
            x  y  label  \\
            100  1.0034717485095028  ipxiao
              \\
            200  1.0108085516907024  ipxiao
              \\
            300  1.0266189351150794  ipxiao
              \\
            400  1.0336039239297363  ipxiao
              \\
            500  1.051406614310668  ipxiao
              \\
            600  1.0686949278656799  ipxiao
              \\
            700  1.0742642636856516  ipxiao
              \\
            800  1.0976311481572165  ipxiao
              \\
            900  1.1388173529920471  ipxiao
              \\
            1000  1.1520897039266373  ipxiao
              \\
        }
        ;
    \addplot[c02, scatter, only marks, scatter src={explicit symbolic}]
        table[row sep={\\}, meta={label}]
        {
            x  y  label  \\
            100  1.1911421777665712  iprelaxfold
              \\
            200  1.2765465569253474  iprelaxfold
              \\
            300  1.429897355481133  iprelaxfold
              \\
            400  1.588216808829317  iprelaxfold
              \\
            500  1.8059240774740366  iprelaxfold
              \\
            600  2.0272345953317275  iprelaxfold
              \\
            700  2.313183014978612  iprelaxfold
              \\
            800  2.422612831490539  iprelaxfold
              \\
            900  2.732299866744741  iprelaxfold
              \\
            1000  3.160620788857027  iprelaxfold
              \\
        }
        ;
    \addplot[c02, scatter, only marks, scatter src={explicit symbolic}]
        table[row sep={\\}, meta={label}]
        {
            x  y  label  \\
            100  1.0034717485095028  iprelaxxiao
              \\
            200  1.0108085516907024  iprelaxxiao
              \\
            300  1.027330779813536  iprelaxxiao
              \\
            400  1.0336039239297363  iprelaxxiao
              \\
            500  1.051406614310668  iprelaxxiao
              \\
            600  1.0686949278656799  iprelaxxiao
              \\
            700  1.0742642636856516  iprelaxxiao
              \\
            800  1.0976311481572165  iprelaxxiao
              \\
            900  1.1388173529920471  iprelaxxiao
              \\
            1000  1.1520897039266373  iprelaxxiao
              \\
        }
        ;
\end{axis}
\end{tikzpicture}
        \label{fig:er_mean_ip_lp}
      \end{subfigure}
      \vskip\baselineskip
      \begin{subfigure}[b]{0.48\textwidth}
        \centering
      % Recommended preamble:
\begin{tikzpicture}
\begin{axis}[height={160pt}, width={180pt}, title={King's subgraphs}, xlabel={Number of vertices}, xmajorgrids={true}, mark size={3.5pt}, ymode={log}, ymajorgrids={true}, yminorticks={false}, ylabel={Number of branches}, label style={font={\footnotesize}}, tick label style={font={\footnotesize}}, scatter/classes={ipfold={mark={o}}, ipxiao={mark={triangle}}, lpfold={mark={diamond}}, lpxiao={mark={pentagon}}}, legend style={legend columns={4}, at={(0.51,-0.26)
}, anchor={south}, draw={none}, font={\footnotesize}, column sep={1.5}}]
    \addplot[c01, scatter, only marks, scatter src={explicit symbolic}]
        table[row sep={\\}, meta={label}]
        {
            x  y  label  \\
            80  1.0038787037880033  ipfold
              \\
            180  1.1391114722921258  ipfold
              \\
            320  2.104611846648551  ipfold
              \\
            500  10.24809660393024  ipfold
              \\
            720  206.73703276213993  ipfold
              \\
        }
        ;
    \addplot[c01, scatter, only marks, scatter src={explicit symbolic}]
        table[row sep={\\}, meta={label}]
        {
            x  y  label  \\
            80  1.0  ipxiao
              \\
            180  1.0  ipxiao
              \\
            320  1.024503428619609  ipxiao
              \\
            500  1.2330241635884165  ipxiao
              \\
            720  2.4502438651732428  ipxiao
              \\
        }
        ;
    \addplot[c02, scatter, only marks, scatter src={explicit symbolic}]
        table[row sep={\\}, meta={label}]
        {
            x  y  label  \\
            80  1.0038787037880033  lpfold
              \\
            180  1.1506086143164505  lpfold
              \\
            320  2.286360506070147  lpfold
              \\
            500  13.70829084592613  lpfold
              \\
            720  395.32506166592134  lpfold
              \\
        }
        ;
    \addplot[c02, scatter, only marks, scatter src={explicit symbolic}]
        table[row sep={\\}, meta={label}]
        {
            x  y  label  \\
            80  1.0  lpxiao
              \\
            180  1.0  lpxiao
              \\
            320  1.02544260182817  lpxiao
              \\
            500  1.2686416725584773  lpxiao
              \\
            720  2.7856829698399665  lpxiao
              \\
        }
        ;
\end{axis}
\end{tikzpicture}
        \label{fig:ksg_mean_ip_lp}
      \end{subfigure}
      \hfill
      \begin{subfigure}[b]{0.48\textwidth}
        \centering
      % Recommended preamble:
\begin{tikzpicture}
\begin{axis}[height={160pt}, width={180pt}, title={Grid graphs}, xlabel={Number of vertices}, xmajorgrids={true}, mark size={3.5pt}, ymode={log}, ymajorgrids={true}, yminorticks={false}, ylabel={Number of branches}, label style={font={\footnotesize}}, tick label style={font={\footnotesize}}, scatter/classes={ipfold={mark={o}}, ipxiao={mark={triangle}}, lpfold={mark={diamond}}, lpxiao={mark={pentagon}}}, legend style={legend columns={4}, at={(0.51,-0.26)
}, anchor={south}, draw={none}, font={\footnotesize}, column sep={1.5}}]
    \addplot[c01, scatter, only marks, scatter src={explicit symbolic}]
        table[row sep={\\}, meta={label}]
        {
            x  y  label  \\
            80  1.0418661359379204  ipfold
              \\
            180  1.11492447912701  ipfold
              \\
            320  1.2992216822427385  ipfold
              \\
            500  1.548349694599786  ipfold
              \\
            720  1.985646481463284  ipfold
              \\
            980  2.8474783787138014  ipfold
              \\
            1280  4.017515216696855  ipfold
              \\
            1620  5.7793401662955395  ipfold
              \\
            2000  10.281781980148734  ipfold
              \\
        }
        ;
    \addplot[c01, scatter, only marks, scatter src={explicit symbolic}]
        table[row sep={\\}, meta={label}]
        {
            x  y  label  \\
            80  1.0252672378885939  ipxiao
              \\
            180  1.0765643727738674  ipxiao
              \\
            320  1.2128795642448666  ipxiao
              \\
            500  1.3883154092510803  ipxiao
              \\
            720  1.6676945971332373  ipxiao
              \\
            980  2.1348044166135725  ipxiao
              \\
            1280  2.941720898597526  ipxiao
              \\
            1620  3.8931587726155197  ipxiao
              \\
            2000  5.864607995647113  ipxiao
              \\
        }
        ;
    \addplot[c02, scatter, only marks, scatter src={explicit symbolic}]
        table[row sep={\\}, meta={label}]
        {
            x  y  label  \\
            80  1.0418661359379204  lpfold
              \\
            180  1.11492447912701  lpfold
              \\
            320  1.2995954987964426  lpfold
              \\
            500  1.550904500101598  lpfold
              \\
            720  1.9916805329223781  lpfold
              \\
            980  2.857578956086256  lpfold
              \\
            1280  4.042214869438546  lpfold
              \\
            1620  5.809110878852561  lpfold
              \\
            2000  10.412139925909354  lpfold
              \\
        }
        ;
    \addplot[c02, scatter, only marks, scatter src={explicit symbolic}]
        table[row sep={\\}, meta={label}]
        {
            x  y  label  \\
            80  1.0252672378885939  lpxiao
              \\
            180  1.0779882688348652  lpxiao
              \\
            320  1.2148331902969913  lpxiao
              \\
            500  1.3931352912038675  lpxiao
              \\
            720  1.674608066502247  lpxiao
              \\
            980  2.1465836173258275  lpxiao
              \\
            1280  2.9685117218113333  lpxiao
              \\
            1620  3.940878005875321  lpxiao
              \\
            2000  5.967869136933446  lpxiao
              \\
        }
        ;
\end{axis}
\end{tikzpicture}
        \label{fig:sq_mean_ip_lp}
      \end{subfigure}
  
    \begin{subfigure}{\textwidth}
      \centering
      \begin{tikzpicture}[every mark/.append style={mark size=3.5pt}]
    \begin{axis}[
      hide axis,
      height=46px,
      xmin=0,
      xmax=1,
      ymin=0,
      ymax=1,
      legend style={
        legend columns={6},
        at={(0.51,1.0)},
        anchor={south},
        draw={none},
        font={\footnotesize},
        column sep={5.0}
      }
      ]
      % Add legend entries
      \addlegendimage{mark={o}, color=c01, only marks}
      \addlegendentry{\texttt{ob}}
      \addlegendimage{mark={triangle}, color=c01, only marks}
      \addlegendentry{\texttt{ob+xiao}}
      \addlegendimage{mark={diamond}, color=c02, only marks}
      \addlegendentry{\texttt{ob\_relax}}
      \addlegendimage{mark={pentagon}, color=c02, only marks}
      \addlegendentry{\texttt{ob\_relax+xiao}}
      % \addlegendimage{mark={x}, color=black, only marks}
      % \addlegendentry{\texttt{vc\_3}}
      % \addlegendimage{mark={Mercedes star}, color=black, only marks}
      % \addlegendentry{Segmentation}
      % \addlegendimage{mark={diamond}, color=black, only marks}
      % \addlegendentry{Linkage}
      % \addlegendimage{mark={oplus}, color=black, only marks}
      % \addlegendentry{Promedas}
      % \addlegendimage{mark={otimes}, color=black, only marks}
      % \addlegendentry{ProteinFolding}
  
    \end{axis}
  \end{tikzpicture}
  
    \end{subfigure}
  
    \caption{
        The average number of branches produced by \texttt{ob}, \texttt{ob\_relax}, \texttt{ob+xiao}, and \texttt{ob\_relax+xiao} on different graphs as functions of the graph size.
    }%
  
    \label{fig:branching_comparison_iplp}
  \end{figure*}

% \begin{table}[ht!]
%     \centering
%     \begin{tabular}{c c c c}
%     \hline
%          & 3-regular graphs & Erdos-Renyi graphs & King's sub-graphs \\
%         \hline
%         \texttt{mis2} & 1.0785 & 1.0075 &  \\
%         \texttt{ob\_folding} & 1.0457 & 1.0011 &  \\
%         \texttt{ob\_relax\_folding} & 1.0460 & 1.0011 & \\
%         \hline
%     \end{tabular}
%     \caption{
%         The fitted effective branching complexity $\gamma$ produced by \texttt{ob}, \texttt{ob+xiao}, \texttt{xiao2013}, \texttt{vc}, and \texttt{vc+xiao\&packing} on different graphs.
%         }
%     \label{tbl:compare_ip_lp}
% \end{table}

\section{Conclusion}\label{sec:conclusion}
% We propose a framework that establishes a bridge between the  and the effective configurations under constraint filtering. By solving the set covering problem for the effectiv
% In this paper, a method for finding the optimal branching rule in the branch and bound algorithm is proposed.
% The method searches  by generating the reduced $\alpha$-tensor with the local struture of the problem, then convert the optimal branching rule as a set covering problem, which be solved efficiently. 
In this paper, we present a framework to automatically generate provably optimal branching rules for constraint satisfaction problems, and implement it to solve the maximum independent set problems better.
We also show that this framework can generate better branching rules than the existing ones, which could be used to improve the complexity bound of existing branch-and-bound algorithms.
We also implement the on-the-fly optimal branching algorithm. Numerical results on 3-regular graphs show an advantage compared with state-of-the-art algorithms, even though no predefined branching rule is used.

Although the proposed algorithm can generate the optimal branching rule on sub-graphs with tens of nodes, it is not the end of improving the branch-and-bound algorithm. The vertex selection strategy, the measure function, the way to truncated candidate clauses, and pre-processing with graph rewriting are all potential research directions to further improve the performance of the branch-and-bound algorithm.
The source code under the MIT license is available at the GitHub repository~\cite{optimalbranching}, and a technical guide about how to use the code is available at~\Cref{sec:technical}.

In the future, we expect this framework can be extended to a variety of combinatorial optimization challenges, including the Vertex Cover problem~\cite{xiao2010note}, the Max-SAT problem~\cite{chen2004improved}, and the Traveling Salesman Problem~\cite{eppstein2007traveling}. More importantly, it may also be used in propositional proof and resolution~\cite{urquhart1995complexity,alekhnovich2005lower} to improve the performance of logic reasoning.

\section*{Acknowledgments}
The authors thank Huanhai Zhou, Kaiwen Jin, and Zisong Shen for helpful discussions. 
We acknowledge the use of AI tools like Grammarly and ChatGPT for sentence rephrasing and grammar checks.

\bibliographystyle{siamplain}
\bibliography{references}

\appendix

\section{$\alpha$-tensor and reduced $\alpha$-tensor}\label{sec:alpha-tensor}

The finite-valued entries of $\alpha$-tensor correspond to permissible configurations and they all have the potential to constitute a segment of an MIS configuration. However, since our objective is to determine the MIS size rather than enumerating all MIS configurations, the entries of the $\alpha$-tensor can be further reduced. This pruning is under the basic criterion that if all outcomes under one branch can find better counterparts under another branch, then this branch can be safely discarded during the search process. In the context of the MIS size problem, when branching on the subgraph $R$ of a given graph $G$, consider two configurations $s_1$ and $s_2$ on $R$ and let the set $S$ consist of all MIS configurations on $G$. If for any element $s$ in $S$ satisfying that its segment restricted to $R$ is equal to $s_1$, the element obtained by replacing the segment of $s_1$ in $s$ with $s_2$ still belongs to $S$, then it implies that the MIS size outcomes under the branch corresponding to $s_1$ are a subset of those under branch $s_2$. Consequently, the $\alpha$-tensor entry corresponding to $s_1$ can be safely reduced without affecting the outcome. 
\subsection{Pruning irrelevant entries}\label{sec:irrelevant_entries}
To implement this pruning principle, we first utilize only the permissible configurations and the corresponding current MIS size on $R$ to find such $s_1$-$s_2$ pairs. However, with only local information available, we cannot make assumptions about the entire graph $G$ during branching. Consequently, the pruning condition is leveraged such that $s_1$ and $s_2$ must ensure the containment relationship of branch outcomes for all graphs $G$ that include a subgraph isomorphic to $R$. This defines a partial order on the configuration space on $R$, which is formalized with the following two definitions.
% The entries in the $\alpha$ tensors can be further reduced. In the above example, configuration $0111$ can be removed. This is because for any MIS with the boundary configuration $0111$ on $\partial R$, we can construct another MIS with the boundary configuration $0110$ on $\partial R$. Importantly, the reverse is not true. This is because the boundary configuration $0110$ is less restrictive than the boundary configuration $0111$: any configuration on $G\setminus R$ that is independent with the boundary configuration $0111$ on $\partial R$ is also independent with the configuration $0110$ on $\partial R$, but not vice versa. Since the boundary configuration 0111 is not needed to construct a MIS of $G$, we call this configuration irrelevant. This notion is formalized with the following two definitions.
\begin{definition}[less restrictive relation]\label{def:less-restrictive}
Consider two bitstrings of equal length $n$, $\boldsymbol{s}\in \{0,1\}^n$ and $\boldsymbol{t}\in \{0,1\}^n$. We say that $\boldsymbol{s}$ is less restrictive than ${\boldsymbol{t}}$ if $s_i\leq t_i$ for all $i\in\{1,\dots,n\}$. We denote this by ${\boldsymbol{s}} \prec {\boldsymbol{t}}$.
\end{definition}
Clearly, the least restrictive boundary configuration is the one containing all zeros. For any other boundary configuration, one can always find less restrictive ones by flipping some number of ones to zeros.
\begin{definition}[irrelevant boundary configuration]\label{def:irrelevant-boundary}
Let $R$ be a subgraph of a graph, $\partial R$ its boundary, and $\valpha(R)$ its $\alpha$-tensor. A boundary configuration $\boldsymbol{t}$ is irrelevant if there exists another boundary configuration $\boldsymbol{s}$ such that $\boldsymbol{s} \prec \boldsymbol{t}$ and $\valpha(R)_{\boldsymbol{s}}\geq \valpha(R)_{\boldsymbol{t}}$, or if $\valpha(R)_{\boldsymbol{t}}=-\infty$. A boundary configuration is called relevant if it is not irrelevant.
\end{definition}
Under these definitions, since all independent sets emerging from the $t$ branch can find counterparts in the  $s$ branch that maintain the configurations in $G \backslash R$ but larger in size, the $t$ branch can be safely discarded in the presence of the $s$ branch. We call that the boundary configuration $t$ can be reduced by $s$, which defines a partial order on the effective boundary configurations, and we only need to continue searching in the branches corresponding to the maximal elements of this partially ordered set.
Therefore, we can define the reduced $\alpha$-tensor by setting all entries in the $\alpha$-tensor that correspond to irrelevant boundary vertex configurations to $-\infty$.
\begin{example}[Reduced irrelevant entries to obtain reduced $\alpha$-tensor]
    \centering
    \begin{tabular}{|c|c|c|c|}
    \hline
        $\psigma$ & $\valpha(R)_\psigma$ & $\rvalpha(R)_\psigma$ & reduced by\\
        \hline
        000 & 1 & 1 & -  \\
        001 & 2 & 2 & -\\
        010 & 2 & 2 & - \\
        011 & 2 & $-\infty$ & 010\\
        100 & 1 & $-\infty$ & 000\\
        101 & 2 & $-\infty$ & 001  \\
        110 & 2 & $-\infty$ & 010 \\
        111 & 3 & 3 & -  \\
        
        \hline
    \end{tabular}
    \captionof{table}{The finite-valued entries in $\alpha$-tensor and the corresponding reduced $\alpha$-tensor entries for the sub-graph $R$ in \Cref{fig:branching}. Each row corresponds to a local MIS size associated with the boundary-vertex configuration $\bs_{abc}$. 
    Since no other configurations can reduce configurations $000$, $001$, $010$, and $111$, they are maximal elements in the partial order set, i.e., they are the relevant configurations, whose values on $\rvalpha(R)$ are equal to those on $\valpha(R)$. At the same time, since configurations $011$, $100$, $101$, and $110$ can be reduced by the relevant configurations, their corresponding entries in $\rvalpha(R)_\psigma$ are set to $-\infty$.}
    \label{tbl:alpha-tensor-R}    

\end{example}

% \begin{definition}[reduced $\alpha$-tensor]\label{def:reduced-alpha}
% Let  $\valpha(R)$ be an $\alpha$-tensor for a pattern or replacement graph $R$.
% The corresponding reduced $\alpha$-tensor is defined by setting all entries in $\valpha(R)$ that correspond to irrelevant boundary-vertex configurations to $-\infty$.
% \end{definition}
\subsection{Enhanced pruning via incorporating nearest neighbor information}\label{sec:incorporating_env_info}
It has been demonstrated that the entries in a reduced $\alpha$-tensor can not be further reduced if no environmental information is provided, i.e. the further removal of any finite-valued elements in the reduced $\alpha$-tensors may reduce the MIS size of the host graph. 
% \yijia{[Whether and how to cite the gadget paper here?]} 
However, with a pruning scheme that utilizes environmental information from $R$, we can further reduce the number of entries in the reduced $\alpha$-tensor.

As the amount and precision of environmental information increase, so does the computational cost at each branching step. To address this, we have chosen a strategy that minimizes costs while still considering the environment, concentrating exclusively on the nearest neighbors of boundary vertices and disregarding the connectivity structure among these neighbors.
Let $\alpha(G)$ denotes the MIS size on graph $G$ and $\alpha_{\bs_{V}=\bx}(G)$ denotes the MIS size when vertices $V \subseteq V(G)$ are fixed with configuration $\bs_{V}=\bx$. 
Thus, our criterion for pruning can be traced back to the inequality:
\begin{equation}
    \alpha(G_1 \cup G_2) \leq \alpha(G_1)+\alpha(G_2),
\end{equation}
where $G_1 \cup G_2$ denotes the induced subgraph of $G$ from $V(G_1) \cup V(G_2)$. 
Given a boundary configuration $\bs$ on subgraph $R$, we denote $G \setminus (N(T(\bs)) \cup V(R))$ as $G_{\text{left}}(\bs)$, where $T(\bs)$ is the set of boundary vertices with true assignment in $\bs$.
Then the independent set consistent with $\bs$ has a size that is upper bounded by the one consistent with another boundary configuration $\bt$:
\begin{equation}
    \begin{split}
        \alpha_{\psigma=\bs}(G) =& \alpha_{\psigma=\bs}(R)+\alpha_{\psigma=\bs}(G \backslash R)\\
        =&\valpha(R)_\bs + \alpha(G_{\text{left}}(\bs))\\
        \leq & \valpha(R)_\bs + \alpha(G_{\text{left}}(\bs)\backslash G_{\text{left}}(\bt)) + \alpha(G_{\text{left}}(\bs)\cap G_{\text{left}}(\bt))\\
        \leq & \valpha(R)_\bs + \alpha(G_{\text{left}}(\bs)\backslash G_{\text{left}}(\bt)) + \alpha(G_{\text{left}}(\bt))\\
    \end{split}
\end{equation}

If the MIS size within $R$ and the nearest-neighbor graph structure upon which $G_{\text{left}}(\bs)\backslash G_{\text{left}}(\bt)$ depends satisfy that
\begin{equation}\label{eq:env_reduced_relation}
\valpha(R)_s+ \alpha(G_{\text{left}}(s)\backslash G_{\text{left}}(t)) \leq \valpha(R)_t, 
\end{equation}
then we have:
\begin{equation}
    \begin{split}
        \alpha_{\psigma=\bs}(G) \leq& \valpha(R)_\bt + \alpha(G_{\text{left}}(\bt))\\
        =&\alpha_{\psigma=\bt}(R)+\alpha_{\psigma=\bt}(G \backslash R)\\
        =&\alpha_{\psigma=\bt}(G)\\
    \end{split}
\end{equation}
which means that the maximum MIS size under the $\bs$ branch is bounded by the $\bt$ branch, thus the $\bs$ branch can be safely discarded during the search process. 
Thus, \Cref{eq:env_reduced_relation} defines a partial order on the boundary configurations. 
By setting entries corresponding to elements outside the maximal elements of this partial order set to $-\infty$, we can further simplify the reduced $\alpha$-tensor.

As $G_{\text{left}}(\bs)\backslash G_{\text{left}}(\bt)$ only involves the union and difference between the nearest neighbors of boundary vertices, solving for its MIS size is a local and manageable task, which is a minor component compared to the dominant computational cost of the whole algorithm. Consequently, without altering the order of computational complexity, we enhance the pruning strategy by considering information from the neighboring environment.

\begin{example}[Fine-grained pruning via environment]
    \centering
    \begin{tabular}{|c|c|c|c|}
    \hline
        $\psigma$ & $\rvalpha(R)_\psigma$ & $\rvalpha^{\text{env}}(R)_\psigma$ & reduced by\\
        \hline
        000 & 1 & $-\infty$ & 001  \\
        001 & 2 & 2 & -\\
        010 & 2 & 2 & - \\
        111 & 3 & 3 & -  \\
        
        \hline
    \end{tabular}
    \captionof{table}{The finite-valued entries in the reduced $\alpha$-tensor $\rvalpha(R)_\psigma$ after reducing irrelevant entries and the corresponding entries of the futher-reduced $\alpha$-tensor $\rvalpha^{\text{env}}(R)_\psigma$ for the sub-graph $R$ in \Cref{fig:branching}, with an extra assumption that $|N(c)\backslash V(R)|=1$. Each row corresponds to a local MIS size associated with the boundary-vertex configuration $\bs_{abc}$. 
    Since $\valpha(R)_{000}+ \alpha(G_{\text{left}}(000)\backslash G_{\text{left}}(001)) = 1+|N(c)\backslash V(R)| = 2 = \valpha(R)_{001}$, $000$ can be reduced by $001$ and consequently, $\rvalpha^{\text{env}}(R)_{000}$ is set to $-\infty$.}   

\end{example}

% \section{Complexity of generating the $\alpha$-tensor}\label{sec:complexity}

% \xuanzhao{show the relation between tree width and the complexity of generating the $\alpha$-tensor}

\begin{figure}[htbp!]
    \centering
    \includegraphics[width=0.6\textwidth]{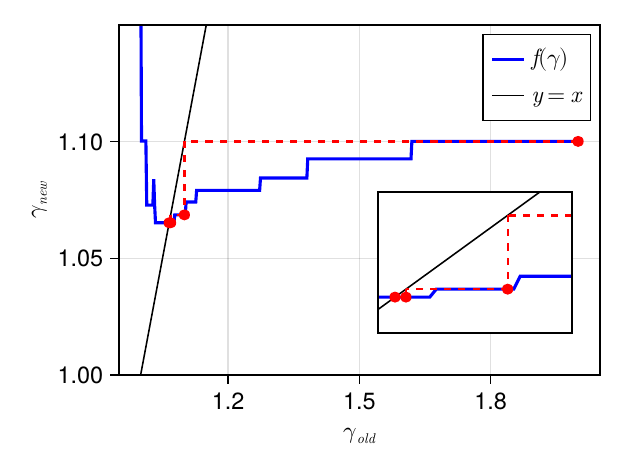}
    \caption{
        An example of the fixed point iteration of $\gamma$ in \Cref{alg:opt_branching}, the inset subplot shows the iteration near the fixed point.
        The blue line represents the value of~$f(\gamma)$, the black line represents~$y = x$, and the red dashed lines and points represent the iteration steps.
        }
    \label{fig:fixed_point_gamma}
\end{figure}

\section{Fixed Point Iteration}\label{sec:fixed_point}

In this section, we provide more details about the fixed point iteration used in~\Cref{alg:opt_branching}.
Let $f$ be the function representing the process described in the while loop of~\Cref{alg:opt_branching}, which takes a~$\gamma_{old}$ as input to generate a new~$\gamma_{new}$. 
Then the following theorems hold.

\begin{theorem}
    The function~$f$ has only one fixed point~$\gamma_0$, i.e.,~$f(\gamma_0) = \gamma_0$, where $\gamma_0$ is the solution of the WMSC problem defined in~\Cref{eq:gamma}.
\end{theorem}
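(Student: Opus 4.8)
The plan is to reduce the statement to an elementary monotonicity argument about a one–parameter family of single‑variable functions. For every valid selection $\V{x}\in\{0,1\}^{|\mathcal{C}|}$ (one whose chosen index sets $J_i$ cover $\{1,\dots,|\mathcal{S}_R|\}$) I would set
\[
g_{\V{x}}(\gamma)=\sum_{i\,:\,x_i=1}\gamma^{-\Delta\rho(c_i)} .
\]
Assuming, as always in this setting, that each candidate clause strictly reduces the measure, i.e. $\Delta\rho(c_i)\ge 1$, the function $g_{\V{x}}$ is continuous and strictly decreasing on $[1,\infty)$, with $g_{\V{x}}(1)=\#\{i:x_i=1\}\ge 1$ and $g_{\V{x}}(\gamma)\to 0$; hence it has a unique root $\gamma_{\V{x}}\in[1,\infty)$, which is exactly the branching complexity of the rule $\mathcal{D}=\bigvee_{x_i=1}c_i$ in the sense of \Cref{def:branching-complexity}. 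With this notation the optimum $\gamma_0$ of \Cref{eq:gamma} is $\gamma_0=\min_{\V{x}\text{ valid}}\gamma_{\V{x}}$, and one pass through the while loop of \Cref{alg:opt_branching} computes $f(\gamma)=\gamma_{\V{x}(\gamma)}$, where $\V{x}(\gamma)$ is any exact minimizer of $g_{\V{x}}(\gamma)$ over valid $\V{x}$ returned by \texttt{wmsc\_solver}.

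The single fact I would use repeatedly is that, by strict monotonicity of $g_{\V{x}}$, for any valid $\V{x}$ and any $\gamma'\ge 1$ one has $\gamma_{\V{x}}\ge\gamma'\iff g_{\V{x}}(\gamma')\ge 1$ and $\gamma_{\V{x}}\le\gamma'\iff g_{\V{x}}(\gamma')\le 1$. First I would show $f(\gamma_0)=\gamma_0$. For the lower bound, every valid $\V{x}$ satisfies $\gamma_{\V{x}}\ge\gamma_0$, hence $g_{\V{x}}(\gamma_0)\ge 1$, so $\min_{\V{x}}g_{\V{x}}(\gamma_0)\ge 1$. For the upper bound, the optimal rule $\V{x}_0$ achieving $\gamma_{\V{x}_0}=\gamma_0$ has $g_{\V{x}_0}(\gamma_0)=1$, so the minimum equals exactly $1$; therefore any exact minimizer $\V{x}^*$ produced by the solver has $g_{\V{x}^*}(\gamma_0)=1$, i.e. $\gamma_{\V{x}^*}=\gamma_0$, which is precisely $f(\gamma_0)$.

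For uniqueness, I would assume $f(\gamma_1)=\gamma_1$ for some $\gamma_1\in[1,2]$ and let $\V{x}_1=\V{x}(\gamma_1)$ be the corresponding minimizer. On one hand $\V{x}_1$ is valid with $\gamma_{\V{x}_1}=f(\gamma_1)=\gamma_1$, so $\gamma_1\ge\min_{\V{x}}\gamma_{\V{x}}=\gamma_0$. On the other hand, minimality of $\V{x}_1$ gives $g_{\V{x}}(\gamma_1)\ge g_{\V{x}_1}(\gamma_1)=1$ for every valid $\V{x}$, hence $\gamma_{\V{x}}\ge\gamma_1$ for all valid $\V{x}$, and therefore $\gamma_0=\min_{\V{x}}\gamma_{\V{x}}\ge\gamma_1$. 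Combining the two inequalities yields $\gamma_1=\gamma_0$.

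I expect the only real subtlety — the ``hard part'' — to be the bookkeeping around the WMSC oracle: it only promises an exact optimizer of the linear objective $g_{\V{x}}(\gamma)$ for the frozen weight parameter $\gamma$, so the argument must be made insensitive to which optimizer is returned when there are ties, and one must verify carefully that the identity $\min_{\V{x}\text{ valid}}g_{\V{x}}(\gamma_0)=1$ genuinely bridges the set‑covering optimum and the branching‑complexity optimum $\gamma_0$ of \Cref{eq:gamma}. A secondary point worth stating explicitly is the non‑degeneracy condition $\Delta\rho(c_i)\ge 1$, which is what guarantees that every $g_{\V{x}}$ is strictly decreasing and hence that each root $\gamma_{\V{x}}$, and the map $f$ itself, is well defined; everything else is routine.
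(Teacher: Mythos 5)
Your proposal is correct and rests on the same two ingredients as the paper's own proof: the strict monotonicity of $\gamma\mapsto\sum_i\gamma^{-\Delta\rho(c_i)}x_i$ for fixed $\V{x}$, and the optimality of the WMSC solution, which forces $\min_{\V{x}\text{ valid}}g_{\V{x}}(\gamma_0)=1$. Your reformulation via the per-rule roots $\gamma_{\V{x}}$ and the identity $\gamma_0=\min_{\V{x}}\gamma_{\V{x}}$ is essentially the paper's argument in cleaner notation. One point where you actually go further than the paper: its uniqueness step only shows $f(\gamma_1)<\gamma_1$ for $\gamma_1>\gamma_0$, leaving the case $\gamma_1<\gamma_0$ unaddressed (harmless for the algorithm, which starts at $\gamma=2$ and decreases, but a gap relative to the literal claim of a \emph{unique} fixed point); your symmetric argument --- $\gamma_1\ge\gamma_0$ because $\V{x}_1$ is a valid rule with root $\gamma_1$, and $\gamma_1\le\gamma_0$ because minimality of $g_{\V{x}_1}(\gamma_1)=1$ forces $\gamma_{\V{x}}\ge\gamma_1$ for every valid $\V{x}$ --- excludes all $\gamma_1\neq\gamma_0$ at once. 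You are also right to flag the non-degeneracy condition $\Delta\rho(c_i)\ge 1$ explicitly: the paper only states that the $\Delta\rho$ are non-negative, under which $g_{\V{x}}$ need not be strictly decreasing nor tend to $0$, so your hypothesis is what makes the roots, and hence $f$ itself, well defined.
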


\begin{proof}
    First we define the following function:
    \begin{equation}
        g(\gamma, \V{x}) = \sum_{i = 1}^{|\mathcal{C}|} \gamma^{-\Delta \rho(c_i)} x_i\;,
    \end{equation} 
    which is is monotonically decreasing to~$\gamma$ with $\V{x}$ fixed, since~$\Delta \rho$ are non-negative.
    Recall WMSC problem defined in Eq.~\eqref{eq:gamma}, let its solution be~$(\gamma_{\text{wmsc}}, \V{x}_{\text{wmsc}})$, 
    % i.e., $\nexists \V{x}^\prime \neq \V{x}_{\text{wmsc}}$ satisfies the constraints in Eq.~\eqref{eq:integer_programming} s.t. $\exists \gamma^\prime < \gamma_{\text{wmsc}}$ and $g(\gamma^\prime, \V{x}^\prime) = 1$.
    so that $\forall \V{x}^\prime \neq \V{x}_{\text{wmsc}}$ satisfies the constraints in Eq.~\eqref{eq:integer_programming}, $g(\gamma_{\text{wmsc}}, \V{x}^\prime) \geq 1$ (if not, a smaller~$\gamma$ can be found).
    Thus, in the iteration process taking $\gamma_{old} = \gamma_{\text{wmsc}}$ as input, the output is given by $\gamma_{new} = \gamma_{\text{wmsc}}$, i.e., $\gamma_{\text{wmsc}}$ is the fixed point of the iteration function~$f$.

    Let~$\gamma_1 > \gamma_0$, then in the integer programming progress, the function~$g$ is minimized over~$\V{x}$ so that resulting in~$g(\gamma_1, \V{x}') < g(f(\gamma_1), \V{x}') = 1$, which implies~$f(\gamma_1) < \gamma_1$.
    Thus,~$\forall \gamma > \gamma_0$ can not be a fixed point of~$f$, i.e.,~$\gamma_0$ is the unique fixed point of~$f$.
\end{proof}

\begin{theorem}
    For any~$\gamma > \gamma_0$, the fixed point iteration in~\Cref{alg:opt_branching} converges to~$\gamma_0$.
\end{theorem}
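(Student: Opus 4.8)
The plan is to exploit the fact that the iteration map $f$, although not obviously continuous, has a \emph{finite range}, which turns the usual monotone‑convergence argument into a finite‑termination statement. First I would record the structure of one loop iteration. There are only finitely many valid covers $\V{x}_1,\dots,\V{x}_m$ (at least one exists, e.g.\ the disjunction of all single‑configuration clauses produced by \Cref{alg:candidate_clauses}), and for each of them the function $g(\gamma,\V{x}_j)=\sum_{i=1}^{|\mathcal{C}|}\gamma^{-\Delta\rho(c_i)}(x_j)_i$ is strictly decreasing in $\gamma$, hence has a unique root $\gamma_j$ (this is exactly the monotonicity already invoked when \texttt{find\_root} is called in \Cref{alg:opt_branching}). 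Consequently the feasible $\gamma$‑values of the program in \Cref{eq:gamma} are precisely $\{\gamma_1,\dots,\gamma_m\}$, its optimum is $\gamma_0=\min_j\gamma_j$, and one iteration sends $\gamma_{\mathrm{old}}$ to $\gamma_{j^\star}$, where $\V{x}_{j^\star}$ is a cover minimizing $g(\gamma_{\mathrm{old}},\cdot)$; in particular $f$ takes values only in the finite set $\{\gamma_1,\dots,\gamma_m\}$.

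Next I would prove the two‑sided estimate $\gamma_0\le f(\gamma_{\mathrm{old}})<\gamma_{\mathrm{old}}$ for every $\gamma_{\mathrm{old}}>\gamma_0$. The lower bound is immediate: $f(\gamma_{\mathrm{old}})=\gamma_{j^\star}\ge\min_j\gamma_j=\gamma_0$. For the upper bound, pick an index $j_0$ with $\gamma_{j_0}=\gamma_0$; optimality of $j^\star$ together with the strict monotonicity of $g(\cdot,\V{x}_{j_0})$ and $\gamma_{\mathrm{old}}>\gamma_0$ give $g(\gamma_{\mathrm{old}},\V{x}_{j^\star})\le g(\gamma_{\mathrm{old}},\V{x}_{j_0})<g(\gamma_0,\V{x}_{j_0})=1$, and since $g(\cdot,\V{x}_{j^\star})$ is strictly decreasing with root $\gamma_{j^\star}$, the inequality $g(\gamma_{\mathrm{old}},\V{x}_{j^\star})<1$ forces $\gamma_{j^\star}<\gamma_{\mathrm{old}}$. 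This is essentially the argument already used in the preceding theorem, now supplemented by the lower bound that keeps the iterates above $\gamma_0$.

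Finally I would close the argument by finiteness. Fix $\gamma>\gamma_0$ and set $\gamma^{(0)}=\gamma$ and $\gamma^{(k+1)}=f(\gamma^{(k)})$. By the estimate the sequence is non‑increasing, bounded below by $\gamma_0$, and \emph{strictly} decreasing at any step with $\gamma^{(k)}>\gamma_0$; moreover $\gamma^{(k)}\in\{\gamma_1,\dots,\gamma_m\}$ for every $k\ge1$. A strictly decreasing sequence inside a set of $m$ real numbers has at most $m$ terms, so after at most $m$ steps one reaches an index $K$ with $\gamma^{(K)}=\gamma^{(K+1)}=f(\gamma^{(K)})$, i.e.\ a fixed point of $f$; by the uniqueness of the fixed point (the preceding theorem) $\gamma^{(K)}=\gamma_0$. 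Hence the fixed point iteration converges to $\gamma_0$, and in fact it terminates at $\gamma_0$ in finitely many steps — exactly when the loop guard $\gamma<\gamma_{\mathrm{old}}$ in \Cref{alg:opt_branching} first fails. I expect the only delicate point to be justifying the strict (rather than merely weak) monotonicity of each $g(\cdot,\V{x}_j)$, which holds as long as every candidate clause has $\Delta\rho(c_i)\ge 1$; this is the case for the clauses generated by \Cref{alg:candidate_clauses} in any non‑degenerate instance, the only excluded case being a clause that removes no vertices, which would correspond to a branch making no progress (effectively $\gamma=\infty$) and is never selected.
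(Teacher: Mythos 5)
Your proposal is correct and follows essentially the same route as the paper's proof: monotone strict decrease of the iterates, the lower bound $\gamma_0$, the finite range of $f$ (the roots of $g(\cdot,\V{x}_j)=1$ over the finitely many valid covers), and uniqueness of the fixed point to identify the limit. Your write-up is in fact somewhat more careful than the paper's --- in particular you explicitly prove the bound $f(\gamma_{\mathrm{old}})\ge\gamma_0$ and make clear that the finite-range argument is what upgrades ``monotone and bounded, hence convergent'' to ``terminates exactly at $\gamma_0$,'' a step the paper only sketches.
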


\begin{proof}
    As been proved in the previous theorem, the function~$f$ has only one fixed point~$\gamma_0$, and for any~$\gamma > \gamma_0$, the sequence~$\gamma, f(\gamma), f(f(\gamma)), \ldots$ is monotonically decreasing and bounded below by~$\gamma_0$.
    Hence, the sequence converges to~$\gamma_0$.

    What's more, since for any $t \geq 1$, $f^t(\gamma) \in \Gamma=\{1 \textless \gamma \leq 2 \mid \exists \V{x}_0$  satisfying the constraints in \Cref{eq:integer_programming},  
    $\text{s.t. } g(\gamma,\V{x}_0)=1\}$ which is a set of finite size, there are at most $|\Gamma|$ possible $f^t(\gamma)$ during the iteration process. Therefore, $\gamma_0$ can be precisely reached and the number of iteration steps needed is bounded by the number of solutions to the corresponding set covering the problem.
\end{proof}

An example is shown in \Cref{fig:fixed_point_gamma}, which describes the iteration of~$\gamma$ during solving the MIS of a randomly generated 3-regular graph.
It is shown that the iteration converges to the fixed point~$\gamma_0$ rapidly in a few steps as expected.

\section{Worst case of the branching algorithm on random graphs} \label{sec:worst_case}

The maximum number of branches generated by the branching algorithm applied to the random graphs depicted in \Cref{fig:branching_comparison} is illustrated in \Cref{fig:branching_comparison_worst}. The results for three-regular graphs closely resemble those of the average case, indicating that the branching complexity is not significantly affected by the graph structure in this scenario. For unbounded-degree graphs, similar observations can be made as shown in \Cref{fig:branching_comparison}. Notably, \texttt{ob} continues to outperform \texttt{akiba2015} while employing the same reduction rules, and \texttt{ob+xiao} demonstrates performance comparable to that of \texttt{akiba2015+xiao\&packing}.

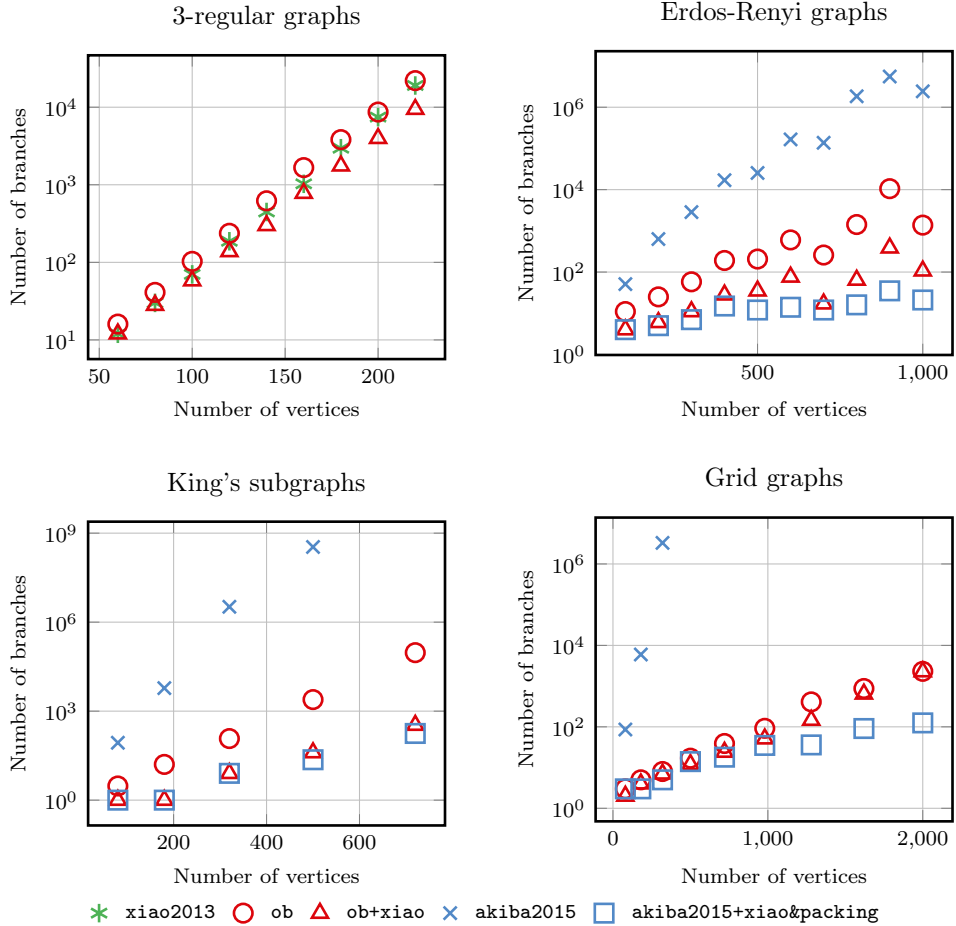
\begin{figure*}[ht!]
    %\captionsetup[subfigure]{font=small} if you like to change caption style
    \begin{subfigure}[b]{0.48\textwidth}
      \centering
    % Recommended preamble:
\begin{tikzpicture}
\begin{axis}[height={160pt}, width={180pt}, title={3-regular graphs}, xlabel={Number of vertices}, xmajorgrids={true}, ymode={log}, mark size={3.5pt}, ymajorgrids={true}, yminorticks={false}, ylabel={Number of branches}, label style={font={\footnotesize}}, tick label style={font={\footnotesize}}, scatter/classes={xiao2013={mark={asterisk}}, obfold={mark={o}}, obxiao={mark={triangle}}}, legend style={legend columns={4}, at={(0.51,-0.26)
}, anchor={south}, draw={none}, font={\footnotesize}, column sep={1.5}}]
    \addplot[c03, scatter, only marks, scatter src={explicit symbolic}, legend image post style={black}, legend style={text={black}, font={\footnotesize}}]
        table[row sep={\\}, meta={label}]
        {
            x  y  label  \\
            60  12  xiao2013
              \\
            80  30  xiao2013
              \\
            100  70  xiao2013
              \\
            120  187  xiao2013
              \\
            140  446  xiao2013
              \\
            160  1029  xiao2013
              \\
            180  2960  xiao2013
              \\
            200  7457  xiao2013
              \\
            220  19055  xiao2013
              \\
        }
        ;
    \addplot[c01, scatter, only marks, scatter src={explicit symbolic}]
        table[row sep={\\}, meta={label}]
        {
            x  y  label  \\
            60  16  obfold
              \\
            80  41  obfold
              \\
            100  103  obfold
              \\
            120  237  obfold
              \\
            140  622  obfold
              \\
            160  1665  obfold
              \\
            180  3823  obfold
              \\
            200  8665  obfold
              \\
            220  22018  obfold
              \\
        }
        ;
    \addplot[c01, scatter, only marks, scatter src={explicit symbolic}]
        table[row sep={\\}, meta={label}]
        {
            x  y  label  \\
            60  12  obxiao
              \\
            80  28  obxiao
              \\
            100  58  obxiao
              \\
            120  138  obxiao
              \\
            140  299  obxiao
              \\
            160  779  obxiao
              \\
            180  1755  obxiao
              \\
            200  3971  obxiao
              \\
            220  9424  obxiao
              \\
        }
        ;
\end{axis}
\end{tikzpicture}
      \label{fig:3rr_mean_worst}
    \end{subfigure}
    \hfill
    \begin{subfigure}[b]{0.48\textwidth}
      \centering
    % Recommended preamble:
\begin{tikzpicture}
\begin{axis}[height={160pt}, width={180pt}, title={Erdos-Renyi graphs}, xlabel={Number of vertices}, xmajorgrids={true}, mark size={3.5pt}, ymode={log}, ymajorgrids={true}, yminorticks={false}, ylabel={Number of branches}, label style={font={\footnotesize}}, tick label style={font={\footnotesize}}, scatter/classes={ipfold={mark={o}}, ipxiao={mark={triangle}}, vcfold={mark={x}}, vcpack={mark={square}}}, legend style={legend columns={4}, at={(0.51,-0.26)
}, anchor={south}, draw={none}, font={\footnotesize}, column sep={1.5}}]
    \addplot[c01, scatter, only marks, scatter src={explicit symbolic}]
        table[row sep={\\}, meta={label}]
        {
            x  y  label  \\
            100  11  ipfold
              \\
            200  25  ipfold
              \\
            300  58  ipfold
              \\
            400  191  ipfold
              \\
            500  207  ipfold
              \\
            600  602  ipfold
              \\
            700  259  ipfold
              \\
            800  1423  ipfold
              \\
            900  10554  ipfold
              \\
            1000  1378  ipfold
              \\
        }
        ;
    \addplot[c01, scatter, only marks, scatter src={explicit symbolic}]
        table[row sep={\\}, meta={label}]
        {
            x  y  label  \\
            100  4  ipxiao
              \\
            200  6  ipxiao
              \\
            300  11  ipxiao
              \\
            400  28  ipxiao
              \\
            500  35  ipxiao
              \\
            600  76  ipxiao
              \\
            700  17  ipxiao
              \\
            800  64  ipxiao
              \\
            900  387  ipxiao
              \\
            1000  107  ipxiao
              \\
        }
        ;
    \addplot[c02, scatter, only marks, scatter src={explicit symbolic}, legend image post style={black}, legend style={text={black}, font={\footnotesize}}]
        table[row sep={\\}, meta={label}]
        {
            x  y  label  \\
            100  51  vcfold
              \\
            200  633  vcfold
              \\
            300  2851  vcfold
              \\
            400  16960  vcfold
              \\
            500  25399  vcfold
              \\
            600  165504  vcfold
              \\
            700  137216  vcfold
              \\
            800  1841898  vcfold
              \\
            900  5561262  vcfold
              \\
            1000  2430518  vcfold
              \\
        }
        ;
    \addplot[c02, scatter, only marks, scatter src={explicit symbolic}]
        table[row sep={\\}, meta={label}]
        {
            x  y  label  \\
            100  4  vcpack
              \\
            200  5  vcpack
              \\
            300  7  vcpack
              \\
            400  15  vcpack
              \\
            500  12  vcpack
              \\
            600  14  vcpack
              \\
            700  12  vcpack
              \\
            800  16  vcpack
              \\
            900  35  vcpack
              \\
            1000  21  vcpack
              \\
        }
        ;
\end{axis}
\end{tikzpicture}
      \label{fig:er_mean_worst}
    \end{subfigure}
    \vskip\baselineskip
    \begin{subfigure}[b]{0.48\textwidth}
      \centering
    % Recommended preamble:
\begin{tikzpicture}
\begin{axis}[height={160pt}, width={180pt}, title={King's subgraphs}, xlabel={Number of vertices}, xmajorgrids={true}, mark size={3.5pt}, ymode={log}, ymajorgrids={true}, yminorticks={false}, ylabel={Number of branches}, label style={font={\footnotesize}}, tick label style={font={\footnotesize}}, scatter/classes={ipfold={mark={o}}, ipxiao={mark={triangle}}, vcfold={mark={x}}, vcpack={mark={square}}}, legend style={legend columns={4}, at={(0.51,-0.26)
}, anchor={south}, draw={none}, font={\footnotesize}, column sep={1.5}}]
    \addplot[c01, scatter, only marks, scatter src={explicit symbolic}]
        table[row sep={\\}, meta={label}]
        {
            x  y  label  \\
            80  3  ipfold
              \\
            180  16  ipfold
              \\
            320  119  ipfold
              \\
            500  2451  ipfold
              \\
            720  93547  ipfold
              \\
        }
        ;
    \addplot[c01, scatter, only marks, scatter src={explicit symbolic}]
        table[row sep={\\}, meta={label}]
        {
            x  y  label  \\
            80  1  ipxiao
              \\
            180  1  ipxiao
              \\
            320  8  ipxiao
              \\
            500  40  ipxiao
              \\
            720  337  ipxiao
              \\
        }
        ;
    \addplot[c02, scatter, only marks, scatter src={explicit symbolic}, legend image post style={black}, legend style={text={black}, font={\footnotesize}}]
        table[row sep={\\}, meta={label}]
        {
            x  y  label  \\
            80  86  vcfold
              \\
            180  5975  vcfold
              \\
            320  3293564  vcfold
              \\
            500  341702208  vcfold
              \\
        }
        ;
    \addplot[c02, scatter, only marks, scatter src={explicit symbolic}]
        table[row sep={\\}, meta={label}]
        {
            x  y  label  \\
            80  1  vcpack
              \\
            180  1  vcpack
              \\
            320  8  vcpack
              \\
            500  23  vcpack
              \\
            720  177  vcpack
              \\
        }
        ;
\end{axis}
\end{tikzpicture}
      \label{fig:ksg_mean_worst}
    \end{subfigure}
    \hfill
    \begin{subfigure}[b]{0.48\textwidth}
      \centering
    % Recommended preamble:
\begin{tikzpicture}
\begin{axis}[height={160pt}, width={180pt}, title={Grid graphs}, xlabel={Number of vertices}, xmajorgrids={true}, mark size={3.5pt}, ymode={log}, ymajorgrids={true}, yminorticks={false}, ylabel={Number of branches}, label style={font={\footnotesize}}, tick label style={font={\footnotesize}}, scatter/classes={ipfold={mark={o}}, ipxiao={mark={triangle}}, vcfold={mark={x}}, vcpack={mark={square}}}, legend style={legend columns={4}, at={(0.51,-0.26)
}, anchor={south}, draw={none}, font={\footnotesize}, column sep={1.5}}]
    \addplot[c01, scatter, only marks, scatter src={explicit symbolic}]
        table[row sep={\\}, meta={label}]
        {
            x  y  label  \\
            80  3  ipfold
              \\
            180  5  ipfold
              \\
            320  8  ipfold
              \\
            500  17  ipfold
              \\
            720  39  ipfold
              \\
            980  92  ipfold
              \\
            1280  410  ipfold
              \\
            1620  874  ipfold
              \\
            2000  2306  ipfold
              \\
        }
        ;
    \addplot[c01, scatter, only marks, scatter src={explicit symbolic}]
        table[row sep={\\}, meta={label}]
        {
            x  y  label  \\
            80  2  ipxiao
              \\
            180  4  ipxiao
              \\
            320  7  ipxiao
              \\
            500  12  ipxiao
              \\
            720  24  ipxiao
              \\
            980  51  ipxiao
              \\
            1280  146  ipxiao
              \\
            1620  643  ipxiao
              \\
            2000  2269  ipxiao
              \\
        }
        ;
    \addplot[c02, scatter, only marks, scatter src={explicit symbolic}, legend image post style={black}, legend style={text={black}, font={\footnotesize}}]
        table[row sep={\\}, meta={label}]
        {
            x  y  label  \\
            80  86  vcfold
              \\
            180  5975  vcfold
              \\
            320  3293564  vcfold
              \\
        }
        ;
    \addplot[c02, scatter, only marks, scatter src={explicit symbolic}]
        table[row sep={\\}, meta={label}]
        {
            x  y  label  \\
            80  3  vcpack
              \\
            180  3  vcpack
              \\
            320  5  vcpack
              \\
            500  14  vcpack
              \\
            720  18  vcpack
              \\
            980  35  vcpack
              \\
            1280  36  vcpack
              \\
            1620  91  vcpack
              \\
            2000  125  vcpack
              \\
        }
        ;
\end{axis}
\end{tikzpicture}
      \label{fig:sq_mean_worst}
    \end{subfigure}
  
    \begin{subfigure}{\textwidth}
      \centering
      \begin{tikzpicture}[every mark/.append style={mark size=3.5pt}]
    \begin{axis}[
      hide axis,
      height=46px,
      xmin=0,
      xmax=1,
      ymin=0,
      ymax=1,
      legend style={
        legend columns={6},
        at={(0.51,1.0)},
        anchor={south},
        draw={none},
        font={\footnotesize},
        column sep={5.0}
      }
      ]
      % Add legend entries
      \addlegendimage{mark={asterisk}, color=c03, only marks}
      \addlegendentry{\texttt{xiao2013}}
      \addlegendimage{mark={o}, color=c01, only marks}
      \addlegendentry{\texttt{ob}}
      \addlegendimage{mark={triangle}, color=c01, only marks}
      \addlegendentry{\texttt{ob+xiao}}
      \addlegendimage{mark={x}, color=c02, only marks}
      \addlegendentry{\texttt{akiba2015}}
      \addlegendimage{mark={square}, color=c02, only marks}
      \addlegendentry{\texttt{akiba2015+xiao\&packing}}
      % \addlegendimage{mark={x}, color=black, only marks}
      % \addlegendentry{\texttt{vc\_3}}
      % \addlegendimage{mark={Mercedes star}, color=black, only marks}
      % \addlegendentry{Segmentation}
      % \addlegendimage{mark={diamond}, color=black, only marks}
      % \addlegendentry{Linkage}
      % \addlegendimage{mark={oplus}, color=black, only marks}
      % \addlegendentry{Promedas}
      % \addlegendimage{mark={otimes}, color=black, only marks}
      % \addlegendentry{ProteinFolding}
  
    \end{axis}
  \end{tikzpicture}
  
      \label{fig:legend_compare_worst}
    \end{subfigure}
  
    \caption{
        The largest number of branches produced by various algorithms on different graphs as functions of the graph size.
    }%
  
    \label{fig:branching_comparison_worst}
  \end{figure*}

\section{Technical guides}\label{sec:technical}

This appendix covers some technical guides for efficiency, which is mainly about the open-source package \href{https://github.com/ArrogantGao/OptimalBranching.jl}{OptimalBranching}~\cite{optimalbranching} implementing the algorithms in this paper.
It is built on top of multiple open source packages in the Julia ecosystem: a) \href{https://github.com/QuEraComputing/GenericTensorNetworks.jl}{GenericTensorNetworks}~\cite{Liu2023} is a package for solving the solution space properties (including the sizes, the counting, the enumeration and the sampling of solutions) of a constraint satisfaction problem;
b) \href{https://github.com/jump-dev/JuMP.jl}{JuMP}~\cite{Lubin2023} is one of the most high-performance open-source packages for mathematical optimization. It interfaces multiple backends, the \href{https://github.com/jump-dev/HiGHS.jl}{HiGHS}~\cite{huangfu2018parallelizing} backend is used in this work for solving the linear programming (LP) and \href{https://github.com/scipopt/SCIP.jl}{SCIP}~\cite{Achterberg2009} is used for solving the mixed-integer programming (MIP) problems;
c) \href{https://github.com/JuliaGraphs/Graphs.jl}{Graphs}~\cite{Graphs2021} is a foundational package for graph manipulation in the Julia community.

The \texttt{OptimalBranching} is a meta package that consists of two component packages: \texttt{OptimalBranchingCore} and \texttt{OptimalBranchingMIS}.
The \texttt{OptimalBranchingCore} contains the core functionalities of the optimal branching algorithm and is designed for the sake of modularity and extensibility.
The \texttt{OptimalBranchingMIS} is a specific application of the optimal branching algorithm, which is used to solve the maximum independent set problem.
The \texttt{OptimalBranching} package can be used directly in a Julia REPL as follows:
\begin{lstlisting}
julia> using OptimalBranching, Graphs

julia> graph = smallgraph(:tutte)
{46, 69} undirected simple Int64 graph

julia> mis_branch_count(graph)
(19, 2)
\end{lstlisting}
% When using the package for the first time, you will be prompted to install the required packages. Type ``y'' to continue. \xuanzhao{May remove this.}
Here the main function \texttt{mis\_branch\_count} takes a graph as the input and returns a tuple of the maximum independent set size and the number of branches generated by the optimal branching strategy.
In this example, the maximum independent set size of the Tutte graph is 19, and the optimal branching strategy only generates 2 branches in the branching tree.
The first execution of this function will be a bit slow due to Julia's just-in-time compilation.
After that, the subsequent runs will be faster.
The algorithm in default is corresponding to the \texttt{ob\_mis2} in the main text.
% already introduced in numerical results section
% The branching is enhanced with the optimal rule generation strategy that is introduced in this paper.
% The reduction is a standard one, which is similar to the one used in \texttt{mis2}~\cite{Fomin2013}.

The component package \texttt{OptimalBranchingCore} contains the core functionalities of the optimal branching algorithm and is designed for the sake of modularity and extensibility.
The following code snippet shows an example of how to use this module to generate the optimal branching rule.
\begin{lstlisting}
julia> using OptimalBranchingCore, OptimalBranchingCore.BitBasis

julia> tbl = BranchingTable(5, [
        [[0, 0, 0, 0, 1], [0, 0, 0, 1, 0]],
        [[0, 0, 1, 0, 1]],
        [[0, 1, 0, 1, 0]],
        [[1, 1, 1, 0, 0]]])
BranchingTable{LongLongUInt{1}}
10000, 01000
10100
01010
00111

julia> candidates = collect(OptimalBranchingCore.candidate_clauses(tbl))
14-element Vector{Clause{LongLongUInt{1}}}:
 Clause{LongLongUInt{1}}: ¬#5
 Clause{LongLongUInt{1}}: ¬#1 ∧ ¬#2 ∧ #3 ∧ ¬#4 ∧ #5
 Clause{LongLongUInt{1}}: #3 ∧ ¬#4
 Clause{LongLongUInt{1}}: ¬#1 ∧ ¬#3 ∧ #4 ∧ ¬#5
 Clause{LongLongUInt{1}}: ¬#1
 Clause{LongLongUInt{1}}: ¬#1 ∧ #2 ∧ ¬#3 ∧ #4 ∧ ¬#5
 Clause{LongLongUInt{1}}: ¬#1 ∧ ¬#2 ∧ ¬#3 ∧ ¬#4 ∧ #5
 Clause{LongLongUInt{1}}: ¬#4
 Clause{LongLongUInt{1}}: #1 ∧ #2 ∧ #3 ∧ ¬#4 ∧ ¬#5
 Clause{LongLongUInt{1}}: ¬#1 ∧ ¬#2 ∧ ¬#4 ∧ #5
 Clause{LongLongUInt{1}}: ¬#1 ∧ ¬#3
 Clause{LongLongUInt{1}}: ¬#1 ∧ ¬#2
 Clause{LongLongUInt{1}}: #2 ∧ ¬#5
 Clause{LongLongUInt{1}}: ¬#1 ∧ ¬#2 ∧ ¬#3 ∧ #4 ∧ ¬#5

julia> Δρ = [length(literals(sc)) for sc in candidates]; println(Δρ)
[1, 5, 2, 4, 1, 5, 5, 1, 5, 4, 2, 2, 2, 5]

julia> res_ip = OptimalBranchingCore.minimize_γ(tbl, candidates, Δρ, IPSolver())
OptimalBranchingResult{LongLongUInt{1}, Int64}:
 selected_ids: [4, 2, 9]
 optimal_rule: DNF{LongLongUInt{1}}: (¬#1 ∧ ¬#3 ∧ #4 ∧ ¬#5) ∨ (¬#1 ∧ ¬#2 ∧ #3 ∧ ¬#4 ∧ #5) ∨ (#1 ∧ #2 ∧ #3 ∧ ¬#4 ∧ ¬#5)
 branching_vector: [4, 5, 5]
 γ: 1.2671683045421243
\end{lstlisting}

\begin{itemize}
    \item \textbf{Line 3:} We first setup the problem by constructing a branching table \texttt{tbl}, which is a table of the boundary-grouped configurations from the example in \Cref{fig:branching}. To goal is to design a boolean expression in the DNF form, which is satisfied by at least one element from each row in this table. Meanwhile, we require that the branching complexity $\gamma$ is minimized.
    \item \textbf{Line 14:} We generate all possible clauses \texttt{candidates} from the branching table using the \Cref{alg:candidate_clauses}. Here, the total number of candidate clauses is $14$, which is much smaller than $3^5 = 243$.
    The discarded clauses are those that can not form optimal branching rules. They do not use the maximum number of literals to cover the same set of rows in \texttt{tbl}.
    \item \textbf{Line 31:} The associated problem size reduction values $\Delta \rho$ of the clauses are computed. Here, we assume that the size reduction value is only related to the number of literals in the clause for simplicity. In practice, it is also related to the measure $\rho$, and the environment of the chosen sub-graph.
    \item \textbf{Line 34:} With the branching table \texttt{tbl}, the candidate clauses \texttt{candidates}, and the associated problem size reduction values $\Delta \rho$, we minimize the $\gamma$ value (\Cref{eq:gamma}) by iteratively solving the WMSC problem (\Cref{alg:opt_branching}) with the integer programming backend \texttt{IPSolver}.
    The resulting $\gamma$ value is $\sim 1.2672$, the associated branching rule (the 2-nd, 4-th, and 9-th clauses in \texttt{candidates}) is
    \begin{equation}
        \begin{split}
       \mathcal{D} = (&\texttt{\#1} \land \texttt{\#2} \land \texttt{\#3} \land \neg \texttt{\#4} \land \neg \texttt{\#5}) \lor \\
       &(\neg \texttt{\#1} \land \neg \texttt{\#3} \land \texttt{\#4} \land \neg \texttt{\#5}) \lor \\
       &(\neg \texttt{\#1} \land \neg \texttt{\#2} \land \texttt{\#3} \land \neg \texttt{\#4} \land \texttt{\#5}),
        \end{split}
    \end{equation}
    and the associated branching vector is $(4, 5, 5)$.
\end{itemize}

\end{document}